\newcommand{\pa}{\partial}
\newcommand{\la}{\lambda}
\newcommand{\va}{\varepsilon}
\newcommand{\dis}{\displaystyle}
\theoremstyle{plain}
\newtheorem{theorem}{Theorem}[section]
\newtheorem{lemma}{Lemma}[section]
\newtheorem{corollary}{Corollary}[section]
\numberwithin{equation}{section}
\newcommand{\n}{{\mathcal  N}}
\newcommand{\R}{{\mathbb R}}
\newcommand{\N}{{\mathbb N}}
\begin{document}
	\title{A Global Compact Result for a Fractional Elliptic Problem with  Critical Sobolev-Hardy 
		Nonlinearities on $\R^N$
		\thanks {The research was supported by the Natural Science Foundation of China
			(11101160,11271141) and the China Scholarship Council (201508440330)  }}
	\author{ Lingyu Jin, Shaomei Fang\\\small{College of Science, South China Agriculture
			University,}
		\\\small{ Guangzhou 510642, P. R. China}
		\\
	}
	\date{}
	\maketitle
	\begin{abstract}
		In this paper, we are
		concerned with the following type of elliptic problems:
		$$
		\begin{cases}
		(-\Delta)^{\alpha} u+a(x) u=\frac{|u|^{2^*_{s}-2}u}{|x|^s}+k(x)|u|^{q-2}u,\ \
		\,\\
		u\,\in\,H^\alpha(\R^N),
		\end{cases}
		\eqno {(*)} $$
		where   $2<q< 2^*$, $0<\alpha<1$, $0<s<2\alpha$, $2^*_{s}=2(N-s)/(N-2\alpha)$ is the critical Sobolev-Hardy exponent,
		$2^*=2N/(N-2\alpha)$ is the critical Sobolev exponent, $a(x),k(x)\in C(\R^N)$. Through a compactness
		analysis of the functional associated to $(*)$, we
		obtain the existence of positive solutions for $(*)$ under
		certain assumptions on $a(x),k(x)$.
	\end{abstract}
	
	\bigbreak
	{\bf Key words and phrases}. \ \ fractional Laplacian,  compactness, positive solution, unbounded domain, Sobolev-Hardy nonlinearity.
	\bigbreak
	{AMS Classification:} 35J10 35J20 35J60
	\bigbreak

	\section{Introduction}
	We consider the following nonlinear elliptic equations:
	
	\begin{equation}\label{1.1}
	\begin{cases}
	(-\Delta)^{\alpha} u+a(x) u=\dis\frac{|u|^{2^*_{s}-2}u}{|x|^s}+k(x)|u|^{q-2}u, \,x\in \R^N\ \
	\,\\
	u\,\in\,H^\alpha(\R^N),
	\end{cases}
	\end{equation}
	where  $2<q < 2^*$, $0<\alpha<1$, $0<s<\alpha$, $2^*_{s}=2(N-s)/(N-2\alpha)$ is the critical Sobolev exponent, $2^*=2N/(N-2\alpha)$ is the critical Sobolev exponent,  $a(x),k(x)\in C(\R^N)$.
	
	In the case $\alpha=1$, problem (\ref{1.1}) with the Sobolev-Hardy term has been extensively studied  (see  \cite{CP}, \cite{CP1}, \cite{Ch}, \cite{DJP}, \cite{J}).
	For $0<\alpha<1$ the nonlocal operator $(-\Delta)^\alpha$ in $\R^N$ is defined on the Schwarz class through the Fourier form or via the Riesz potential. Recently the fractional and more general non-local operators of elliptic type have been widely studied, both for their interesting theoretical structure and concrete applications in many fields such as optimization, finance, phase transitions, stratified materials, anomalous diffusion and so on (see \cite{FB}, \cite{FQ}, \cite{NPV}, \cite{SV}, \cite{SV1}, \cite{SZY}). When $s=0$, (1.1) are the
	elliptic equation involving the nonlocal operator and the critical Sobolev nonlinearity.  Abundant  results  have been accumulated (see \cite{FB}, \cite{FQ}, \cite{SV}, \cite{SV1}, \cite{SZY}). When $0<s<2\alpha$, (\ref{1.1})  has the Sobolev-Hardy nonlinearity. In particular, recently Yang etc. in \cite{YY}, \cite{WY} considered  the existence of solutions for (\ref{1.1}) ($0<s<2\alpha$ or $s=2\alpha$) in a bounded domain. Motivated by it,  we consider the compactness analysis and thereby obtain the existence of the solutions for problem (\ref{1.1}) in $\R^N$. Compare with Yang's work, the new difficulty of this problem  that emerges here is the lack of compactness caused by the unbounded domain $\R^N$. As is well known, the translation invariance of $\R^N$ and the scaling invariance of critical exponents are typical
	difficulties in the study of elliptic equations. Indeed,
	such invariance disables  the compactness of the embeddings. To overcome the difficulties caused by the lack of compactness, we carry out a non-compactness analysis which can distinctly express all the parts which cause non-compactness. As  a result, we are able to obtain the existence of nontrival solutions of the
	elliptic problem including the critical nonlinear term on unbounded domain  by getting rid of
	these noncompact factors.
	To be more specific, for
	the Palais-Smale sequences of the variational functional
	corresponding to (\ref {1.1}) we first establish a  complete noncompact expression  which includ all the blowing up
	bubbles caused by critical Sobolev-Hardy and unbounded
	domains. Then by applying the noncompact expression, we derive the existence  of positive solutions for  (\ref {1.1}).  Our methods base on   some techniques of \cite{CP}, \cite {L1}, \cite {L2}, \cite{PP}, \cite {Sm}, \cite {S2}, \cite {ZC},  \cite{Y}, \cite{Y1}.

	Before introducing our main results, we give  some
	notations and assumptions.
	
	{\bf Notations and assumptions:}
	
	Let $N\geq 1$, $u\in L^2(\R^N)$, let the Fourier transform of $u$ be 
	$$\widehat{u}(\xi)=\mathcal{F}(\xi):=\frac{1}{(2\pi)^{\frac{N}{2}}}\int_{\R^N}e^{-i\xi\cdot x}u_0(x)dx.$$
	For  $\alpha>0 $, the  Sobolev space $H^\alpha(\R^N)$ is defined as the completion of $C^\infty_0(\R^N)$ with the norm
	$$\|u\|_{H^\alpha(\R^N)}=\|\widehat{u}\|_{L^2(\R^N)}+\||\xi|^\alpha\widehat{u}\|_{L^2(\R^N)}.$$
	
	Let $\dot{H}^\alpha(\R^N)$ be the homogeneous version  as the completion of $C^\infty _0(\R^N)$ under the norm
	$$\|u\|_{\dot{H}^\alpha(\R^N)}=\||\xi|^\alpha\widehat{u}\|_{L^2(\R^N)}.$$
	We define the operator $(-\Delta)^\alpha u,\alpha\in\R$ by the Fourier transform $$\widehat{(-\Delta)^{\alpha} u}(\xi)=|\xi|^{2\alpha}\hat{u}(\xi),\ \ \forall u\in C^\infty _0(\R^N).$$
	Then we have $$\||\xi|^\alpha\widehat{u}\|^2_{L^2(\R^N)}=\int_{\R^N}|(-\Delta )^{\alpha/2}u|^2dx.$$
By the Parseval identity, we also have
$$\|u\|_{H^\alpha(\R^N)}^2=\|u\|^2_{L^2(\R^N)}+\int_{\R^N}\int_{ \R^N} \frac{|u(x)-u(y)|^2}{|x-y|^{N+2\alpha}}dxdy=\|u\|^2_{L^2(\R^N)}+\int_{\R^N}|(-\Delta )^{\alpha/2}u|^2dx.$$
 Denote $c$ and $C$ as
arbitrary constants. Let $B(x,r)$ denote a ball centered at $x$ with
radius $r$, $B(r)$ denote a ball centered at 0 with radius $r$ and  $B(x,r)^C=\R^N\setminus B(x,r)$. 

In this paper we assume that:

(a) $0\leq a(x)\in C(\R^N)$, $0\leq k(x)\in C(\R^N)$;

(b) $\displaystyle\lim_{|x|\rightarrow\infty} a(x)=\bar
a>0,\lim_{|x|\rightarrow\infty} k(x)=
\bar k>0,\,\,\,\inf_{x\in\R^N}a(x)={\hat
a}>0,\,\inf_{x\in\R^N}k(x)={\hat
k}>0.$

 In the
following, we assume that $a(x),k(x)$ always satisfy (a) and (b).
The energy functional associated with (1.1) is
$$I(u)=\frac{1}{2}\int_{\R^N}\Bigl (|(-\Delta)^{\alpha/2}u|^2
+a(x)u^2\Bigl )
dx-\dis\frac{1}{2^*_{s}}\int_{\R^N}\frac{|u|^{2^*_{s}}}{|x|^s}dx-\frac{1}{q}\int_{\R^N}k(x)|u|^q
dx,\, \ \  u\in H^{\alpha}(\R^N).$$ We next present some problems
associated to (1.1) as the follows.

\noindent The limit equation of (1.1) at infinity is
\begin{equation}\label{(2.1)}
\begin{cases}
(-\Delta)^{\alpha} u+\bar a u=\bar{k}|u|^{q-2}u,\\
u\in H^{\alpha}(\R^N),
\end{cases}
\end{equation}
and its corresponding variational functional is
$$I^\infty(u)=\frac{1}{2}\int_{\R^N}\Bigl (|(-\Delta)^{\alpha/2}u|^2+\bar a
|u|^2\Bigl
)dx-\frac{1}{q}\int_{\R^N}\bar k
|u|^qdx, \,\,\ \  u\in H^{\alpha}(\R^N).$$ The limit equation of (1.1)
related to the Sobolev-Hardy critical nonlinear term is
\begin{equation}\label{1.4}
\begin{cases}
(-\Delta)^{\alpha} u=\dis\frac{|u|^{2^*_{s}-2}u}{|x|^s},\\
u\in \dot{H}^\alpha(\R^N),
\end{cases}
\end{equation}
and the corresponding variational functional is
$$I_s(u)=\frac{1}{2}\int_{\R^N} |(-\Delta)^{\alpha/2}u|^2dx-\frac{1}{2^*_{s}}\int_{\R^N}\frac{|u|^{2^*_{s}}}{|x|^s}dx,\,\,\ \ \
u\in \dot{H}^\alpha(\R^N).$$ 


 In \cite{Y1} Chen and Yang proved that
 all the positive solutions of (\ref{1.4}) are of the form
 $U^\varepsilon(x):=\varepsilon^{\frac{2\alpha-N}{2}} U
 (x/\varepsilon)$,
 and $U(x)$ satisfies \begin{equation}\label{1.5}
 \frac{C_1}{1+|x|^{N-2\alpha}}\leq U(x)\leq \frac{C_2}{1+|x|^{N-2\alpha}},
 \end{equation}
 where $C_2>C_1>0$ are constants. 
 These solutions are also minimizers
 for the quotient
 $$
 S_{\alpha,s}=\inf_{u\in
 	\dot{H}^\alpha(\mathbb{R}^N) \backslash\{0\}}\frac{\dis\int_{\mathbb{R}^N}|(-\Delta)^{\alpha/2}
 	u|^2dx}{\Bigl(\dis\int_{\mathbb{R}^N}\frac{|u|^{2^*_{s}}}{|x|^s}dx\Bigl)^{2/2^*_{s}}},$$ which is associated with the fractional Sobolev-Hardy inequality 
 \[ \Bigl(\dis\int_{\mathbb{R}^N}\frac{|u|^{2^*_{s}}}{|x|^s}dx\Bigl)^{2/2^*_{s}}\leq S_{\alpha,s}^{-1}\dis\int_{\mathbb{R}^N}|(-\Delta)^{\alpha/2}
 u|^2dx. \]
Define
$$D_0=\int_{\mathbb{R}^N}\Bigl(\frac{1}{2}|(-\Delta)^{\alpha/2}
U|^2-\frac{1}{2^*_{s}}\frac{|U|^{2^*_{s}}}{|x|^s}\Bigl)dx=\frac{2\alpha-s}{2(N-s)}S_{\alpha,s}^{\frac{N-s}{2\alpha-s}},$$
$$\n=\{u\in H^{\alpha}(\R^N)\setminus
\{0\}\,\,|\,\,\int_{\R^N}\Bigl(|(-\Delta)^{\alpha/2}u|^2+\bar
a|u|^2-\bar k|u|^q\Bigl) dx=0\},$$
and 
$$J^\infty=\inf_{u\in \n}I^\infty (u).$$
It is known that $\n \neq\emptyset$ since problem (\ref{(2.1)}) has
at least one positive solution if $N> 2\alpha$ (see \cite{S})
for $1<q<2^*$.

The main result of our paper is as follows:
\begin{theorem}\label{a}
Suppose $a(x),\,k(x)$ satisfy (a) (b),  $ 2<q<2^*, N> 2\alpha$. Assume that $\{u_n\}$ is a positive
Palais-Smale sequence of I at level $d\geq 0$, then there exist
two sequences
$\{R^j_n\}\subset\R^+\,(1\leq j\leq l_1$) and $\{y^j_n\}\subset\R^N\,(1\leq j\leq
l_2), \,0\leq u\in H^{\alpha}(\R^N)$, and $ 0< u_j\in H^{\alpha}(\R^N) \,(1\leq j\leq
l_2), (l_1,l_2\in \N)$ such that up to a subsequence:

$\bullet\,\,I' (u)=0,\,\,{I^{\infty} }'(u_j)=0\,(1\leq j\leq l_2);$

$\bullet\,\,R^j_n\rightarrow 0\,\,(1\leq j\leq
l_1)\text{ as }n\rightarrow\infty;$


$\bullet\,\,|y^j_n|\rightarrow \infty\,(1\leq j\leq l_2)\text{ as
}n\rightarrow\infty ;$

$\bullet\,\,\displaystyle
d=I(u)+l_1D_0+\sum^{l_2}_{j=1}I^\infty(u_j);$

\begin{equation}\label{b1}\bullet\,\,
\|u_n-u-\sum^{l_1}_{j=1}U^{R^j_n}
-\sum^{l_2}_{j=1}u_j(x-y^j_n)\|_{H^{\alpha}(\R^N)}=o(1)\text{ as
}n\rightarrow\infty.\,\,\,\,\,\hspace{2cm}\end{equation}  In particular, if
$u\not\equiv 0$, then $u$ is a weakly solution of (\ref{1.1}). Note
that the corresponding sum in (\ref{b1}) will be treated as zero if
$l_i=0\,(i=1,2).$
\end{theorem}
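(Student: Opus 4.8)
The plan is to prove this as a global compactness (Struwe-type) decomposition, organized around the two distinct sources of non-compactness in (\ref{1.1}): the scaling invariance of the critical Sobolev--Hardy term, which produces bubbles concentrating at the origin, and the asymptotic translation invariance at infinity (since $a(x)\to\bar a$, $k(x)\to\bar k$), which produces bubbles escaping to infinity and solving the autonomous limit problem (\ref{(2.1)}). First I would verify that a positive Palais--Smale sequence $\{u_n\}$ is bounded in $H^\alpha(\R^N)$: testing $I'(u_n)$ against $u_n$ and forming a suitable combination $I(u_n)-\tfrac1\mu I'(u_n)[u_n]$ with $2<\mu\le\min\{q,2^*_s\}$ yields, because $q>2$ and $2^*_s>2$, a coercive bound $\|u_n\|_{H^\alpha}^2\le C(1+\|u_n\|_{H^\alpha})$. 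Passing to a subsequence $u_n\rightharpoonup u$, one tests the equation against fixed $\varphi\in C_0^\infty$ and uses weak continuity of the nonlinear terms on bounded sets to obtain $I'(u)=0$; positivity of $u_n$ forces $u\ge 0$. This settles the weak limit and the final assertion that $u$ solves (\ref{1.1}) when $u\not\equiv 0$.

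Next I would run the extraction loop. Setting $w_n=u_n-u$, the Brezis--Lieb lemma applied to $\int k(x)|w_n|^q\,dx$ and to $\int|w_n|^{2^*_s}|x|^{-s}\,dx$, together with $w_n\rightharpoonup 0$ in $L^2$ (so the lower-order cross terms vanish), gives $I(u_n)=I(u)+I(w_n)+o(1)$ and shows $w_n\rightharpoonup 0$ is itself a Palais--Smale sequence for $I$ at level $d-I(u)$. If $w_n\to 0$ strongly the process stops; otherwise I apply the fractional concentration--compactness principle, conveniently realized via the Caffarelli--Silvestre extension that localizes $(-\Delta)^\alpha$ to a degenerate-elliptic problem on $\R^{N+1}_+$, to detect where the lost mass goes. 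A concentration-function argument distinguishes two alternatives. Either the mass collapses at the origin: one finds a scale $R_n\to 0$ such that $R_n^{(N-2\alpha)/2}w_n(R_n\,\cdot)\rightharpoonup V\not\equiv 0$, where by scaling invariance $V$ solves (\ref{1.4}) and hence equals $U$ up to the classification of (\ref{1.5}), and one subtracts the bubble $U^{R_n}$, whose full energy $I(U^{R_n})\to I_s(U)=D_0$ since the $a$- and $k$-terms vanish under rescaling. Or the mass escapes along some $|y_n|\to\infty$: then $w_n(\cdot+y_n)\rightharpoonup u_1\not\equiv 0$, and since $a(x+y_n)\to\bar a$, $k(x+y_n)\to\bar k$ locally uniformly, the limit solves the autonomous problem, so ${I^\infty}'(u_1)=0$ and one subtracts $u_1(\cdot-y_n)$, carrying off energy $I^\infty(u_1)$.

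I would then iterate, checking at each stage that the new remainder $w_n^{(k+1)}=w_n^{(k)}-(\text{extracted bubble})$ is again a Palais--Smale sequence for $I$ at the correspondingly reduced level, which is the energy-splitting-across-scales computation (the subtracted bubble being concentrated either at the origin or near $y_n\to\infty$, where $I$ is well approximated by $I_s$ or $I^\infty$). The scales and centers of distinct bubbles separate --- ratios $R_n^i/R_n^j\to 0$ or $\infty$, distances $|y_n^i-y_n^j|\to\infty$, and the origin-concentrating bubbles are asymptotically orthogonal to the escaping ones --- so the $H^\alpha$-norms and the energies of the extracted profiles add up. Since every Hardy bubble contributes $D_0>0$ and every infinity bubble contributes $I^\infty(u_j)\ge J^\infty>0$ (as $u_j\in\n$), while the total level $d$ is finite, only finitely many extractions are possible; the loop terminates after $l_1$ origin bubbles and $l_2$ infinity bubbles, yielding both the energy identity $d=I(u)+l_1D_0+\sum_{j=1}^{l_2}I^\infty(u_j)$ and the strong-remainder statement (\ref{b1}).

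The step I expect to be the main obstacle is the concentration--compactness analysis: cleanly separating the scaling-type blow-up pinned at the origin by the Hardy weight from the translation-type loss of mass at infinity, and proving that the rescaled and translated weak limits are genuinely nontrivial solutions of the correct limit equations. This demands the right choice of concentration function, a nonvanishing (Lions) lemma valid in the weighted extension setting, uniform estimates surviving the degenerate weight $y^{1-2\alpha}$, and careful control of the cross terms to guarantee exact additivity of norm and energy; the decay (\ref{1.5}) of $U$ and the explicit value of $D_0$ are precisely what make the per-bubble energy accounting exact.
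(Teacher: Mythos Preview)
Your proposal is correct and follows the same Struwe-type global compactness strategy as the paper: boundedness, weak limit solving (\ref{1.1}), Br\'ezis--Lieb energy splitting, iterative bubble extraction of the two types, and termination via energy quantization. Two implementation differences are worth noting. First, you propose to localize via the Caffarelli--Silvestre extension and run concentration--compactness in the degenerate-weighted half-space, whereas the paper stays entirely in the nonlocal setting: it detects the Hardy bubble through a rescaling lemma (Lemma~\ref{l6}, drawn from \cite{Y1}) that directly produces scales $r_n$ with a nontrivial $\dot H^\alpha$ weak limit, and detects the translation bubble through the Lions-type dichotomy of Lemma~\ref{3.1} applied to $|(-\Delta)^{\alpha/2}v_n|^2+|v_n|^2$. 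Second, the paper does not interleave the two extractions as you do; it first exhausts all Hardy bubbles (Step~1), reaching a remainder with $\int_{|x|<R}|v_n|^{2^*_s}|x|^{-s}\,dx=o(1)$ for every $R$, and only then passes to Step~2, where this vanishing is used explicitly (equations (\ref{4.16})--(\ref{4.17})) to show that the translated sequence is a Palais--Smale sequence for $I^\infty$ rather than $I$. Your parallel loop is also valid but shifts the burden onto the asymptotic-orthogonality argument; the paper's sequential ordering trades that for a cleaner cross-term control. Either route works; the paper's is more elementary in that it avoids the extension and its attendant weighted-trace estimates, while yours would give access to local cut-off and monotonicity tools if finer information were needed.
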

{\bf Remarks:}

 1)  Similar as Corollary 3.3 in \cite{Sm}, one can show that any Palais-Smale sequence for $I$ at a level
which is not of the form $m_1D+m_2 J^\infty$, 
$m_1,m_2\in\N\bigcup \{0\}$, gives rise to a non-trivial weak
solution of equation (1.1).



 2) In our non-compactness analysis, we prove
 that the blowing up positive Palais-Smale sequences can bear exactly
 two kinds of bubbles. Up to harmless constants, they are either
 of the form
 $$U^{R_n}(x), \text{
 }|R_n|\rightarrow 0\text{ as } n\rightarrow
 \infty,
$$or $$u(x-y_n)\in H^{1}(\R^N),\,\text{
}|y_n|\rightarrow\infty,\text{ as } n\rightarrow
 \infty,$$where $u$ is the solution of
(\ref{(2.1)}). For any Palais-Smale sequence ${u_n} $ for $I$, ruling out the above two bubbles yields  the existence of a non-trivial weak solution of equation (1.1).

Using above compact results and the Mountain Pass Theorem \cite{AR} we prove
the following corollary.
\begin{corollary}\label{b}
Assume that $2<q<2^*$ for $N\geq 4\alpha$. 
 If $a(x),k(x)$ satisfy (a) and (b),  and
\begin{equation}
\bar a\geq a(x), k(x)\geq \bar k>0,\,k(x)\not\equiv \bar k.
\end{equation}
 Then  (\ref{1.1}) has a nontrivial solution $u\in H^{\alpha}(\R^N)$ which
satisfies $$I(u)< \min\{\frac{2\alpha-s}{2(N-s)}S_{\alpha,s}^{\frac{N-s}{2\alpha-s}}, J^\infty\}.$$
\end{corollary}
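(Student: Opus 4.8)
The plan is to realize the claimed solution as a mountain--pass critical point of $I$ and to use Theorem \ref{a} to recover compactness, the whole argument hinging on keeping the mountain--pass level strictly below the threshold $\min\{D_0,J^\infty\}$, where $D_0=\frac{2\alpha-s}{2(N-s)}S_{\alpha,s}^{\frac{N-s}{2\alpha-s}}$.

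First I would check that $I$ possesses the mountain--pass geometry. Since $q>2$ and $2^*_s>2$, the fractional Sobolev and Sobolev--Hardy inequalities give $I(u)\ge\rho>0$ on a small sphere $\|u\|_{H^\alpha}=r$, while for any fixed $0\le v\not\equiv0$ one has $I(tv)\to-\infty$ as $t\to\infty$, so $I(e)<0$ for $e=t_0v$ with $t_0$ large. This defines a positive level $c=\inf_{\gamma\in\Gamma}\max_{t}I(\gamma(t))>0$, and the Mountain Pass Theorem \cite{AR} yields a Palais--Smale sequence at level $c$. Because $a,k\ge0$ and $\int|(-\Delta)^{\alpha/2}|u||^2\le\int|(-\Delta)^{\alpha/2}u|^2$, I may take the path, hence the Palais--Smale sequence, to consist of nonnegative functions, so that Theorem \ref{a} applies.

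The heart of the proof is the two strict energy estimates $c<J^\infty$ and $c<D_0$. For $c<J^\infty$ I would take a positive ground state $w\in\n$ with $I^\infty(w)=J^\infty$ and test along the ray $s\mapsto sw$. Writing
\[
I(sw)-I^\infty(sw)=\tfrac12\!\int(a-\bar a)(sw)^2-\tfrac{1}{2^*_s}\!\int\frac{(sw)^{2^*_s}}{|x|^s}-\tfrac1q\!\int(k-\bar k)(sw)^q,
\]
every term is $\le0$ by $a\le\bar a$ and $k\ge\bar k$, and the last is strictly negative for $s>0$ because $k\not\equiv\bar k$ and $w>0$; hence at the interior maximizer $s^\ast>0$ of $I(sw)$ one gets $c\le I(s^\ast w)<I^\infty(s^\ast w)\le\max_{s\ge0}I^\infty(sw)=J^\infty$. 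For $c<D_0$ I would insert the truncated extremals $u_\varepsilon=\eta\,U^\varepsilon$, $U^\varepsilon(x)=\varepsilon^{(2\alpha-N)/2}U(x/\varepsilon)$, into $\max_{t\ge0}I(tu_\varepsilon)$. The scale--invariant Dirichlet and Hardy integrals reproduce $D_0$ up to $O(\varepsilon^{N-2\alpha})$; the hypothesis $N\ge4\alpha$ guarantees that $\int au_\varepsilon^2$ is only of order $\varepsilon^{2\alpha}$ (with a harmless $|\log\varepsilon|$ at $N=4\alpha$) and that $\int|U|^q$ converges, while the subcritical term contributes a genuinely negative correction of order $\varepsilon^{N-(N-2\alpha)q/2}$. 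As $q>2$ is exactly the condition $N-(N-2\alpha)q/2<2\alpha$, this negative term dominates, giving $\max_{t}I(tu_\varepsilon)<D_0$ for small $\varepsilon$ and hence $c<D_0$. This comparison is the main obstacle, since it requires the full asymptotic expansion of these integrals for the fractional operator (the estimates of \cite{YY,WY,Y1}) and the precise balancing of the $L^2$ and $L^q$ corrections that forces both conditions $N\ge4\alpha$ and $q>2$.

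Finally I would invoke Theorem \ref{a} on the positive Palais--Smale sequence at level $c$. The decomposition gives $c=I(u)+l_1D_0+\sum_{j=1}^{l_2}I^\infty(u_j)$ with $I(u)\ge0$ (since $u$ is a critical point and $q,2^*_s>2$), $D_0>0$, and $I^\infty(u_j)\ge J^\infty$ (each $u_j\in\n$). If $l_1\ge1$ then $c\ge D_0$, and if $l_2\ge1$ then $c\ge J^\infty$, both contradicting the strict estimates just proved; hence $l_1=l_2=0$. Then \eqref{b1} forces $u_n\to u$ strongly in $H^\alpha$, so that $I(u)=c>0$ and $I'(u)=0$: thus $u$ is a nontrivial weak solution, nonnegative as a strong limit of nonnegative functions and therefore positive, with $I(u)=c<\min\{D_0,J^\infty\}$, which is the assertion.
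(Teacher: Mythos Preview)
Your proposal is correct and follows essentially the same route as the paper: mountain--pass geometry, the two strict level estimates $c<D_0$ (via the extremals $U^\varepsilon$ and the balance of the $L^2$ and $L^q$ corrections under $N\ge4\alpha$, $q>2$) and $c<J^\infty$ (via the ray through a ground state of $I^\infty$ and the comparison inequalities $a\le\bar a$, $k\ge\bar k$, $k\not\equiv\bar k$), followed by the energy decomposition of Theorem~\ref{a} to force $l_1=l_2=0$. The only cosmetic difference is that you truncate $U^\varepsilon$ while the paper normalises it; your choice is in fact cleaner for the $L^2$ estimate at the borderline $N=4\alpha$.
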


This paper is organized as follows. In Section 2, we give some
preliminary lemmas.  In Section 3, we prove Theorem \ref{a} by
carefully analyzing  the features of a positive Palais-Smale
sequence for $I$. Corollary 1.1 is proved at the end of Section 3 by
applying Theorem \ref{a} and the Mountain Pass Theorem.

\section{Some preliminary lemmas}
In order to prove our main theorem, we give the following Lemmas.
\begin{lemma}\label{3.1}(Lemma 2.1, \cite{ZC}) 
   Let $\{\rho_n\}_{n\geq 1}$ be a sequence in $L^1(\R^N)$ satisfying
\begin{equation}\label{(2.4)}\rho_n\geq 0\,\,\, \text{on }
\R^N,\,\,\,\lim_{n\rightarrow\infty}\int_{\R^N}\rho_ndx=\la>0,
\end{equation} where $\la>0$ is fixed. Then there exists a
subsequence $\{\rho_{n_k}\}$ satisfying one of the  following two
possibilities:

(1) (Vanishing):
\begin{equation}\label{2.5}
\lim_{k\rightarrow\infty} \sup_{y\in\R^N}\int_{y+B_R}\rho_{n_k}(x)
dx=0,\,\,\,\, \text{for all } R<+\infty.
\end{equation}

(ii) (Nonvanishing): $\exists \alpha>0, R<+\infty$ and
$\{y_k\}\subset \R^N$ such that $$\displaystyle\lim_{k\rightarrow
+\infty}\int_{y_k+B_R}\rho_{n_k}(x)dx\geq \alpha >0.$$
\end{lemma}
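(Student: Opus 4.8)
The statement is the ``vanishing versus non-vanishing'' dichotomy of concentration--compactness, and the natural tool is Lions' concentration function. The plan is to introduce, for each $n$ and each $R>0$,
\[
Q_n(R):=\sup_{y\in\R^N}\int_{y+B_R}\rho_n(x)\,dx ,
\]
and to read off the two alternatives from its behaviour. First I would record the elementary structural facts: for fixed $n$ the map $R\mapsto Q_n(R)$ is non-decreasing (larger balls capture more mass), and $0\le Q_n(R)\le \int_{\R^N}\rho_n\,dx\to\la$, so the family $\{Q_n(R)\}$ is bounded uniformly in $n$ and $R$. A minor technical remark worth making is that $y\mapsto\int_{y+B_R}\rho_n=(\rho_n*\chi_{B_R})(y)$ is continuous, so the supremum defining $Q_n(R)$ is an honest supremum of a continuous function and there is no measurability issue.

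Next I would set $\ell(R):=\limsup_{n\to\infty}Q_n(R)$, which inherits monotonicity in $R$ and satisfies $0\le\ell(R)\le\la$. The whole dichotomy is then decided by a single clean split. If $\ell(R)=0$ for every $R>0$, then since $Q_n(R)\ge0$ the limit superior being zero forces $\lim_{n\to\infty}Q_n(R)=0$ for every $R$; this is exactly the vanishing alternative \eqref{2.5}, and it holds for the full sequence, so one may take $n_k=k$. Otherwise there is some $R_0>0$ with $\ell(R_0)=:\beta>0$, and we fall into the second case.

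In that second case I would extract the non-vanishing alternative as follows. By definition of the limit superior there is a subsequence $\{n_k\}$ with $Q_{n_k}(R_0)\to\beta$; for each $k$ choose, using the definition of the supremum, a centre $y_k\in\R^N$ with
\[
\int_{y_k+B_{R_0}}\rho_{n_k}(x)\,dx>Q_{n_k}(R_0)-\tfrac1k .
\]
Then $\liminf_{k\to\infty}\int_{y_k+B_{R_0}}\rho_{n_k}\ge\beta$, and passing to a further subsequence along which the limit exists gives $\lim_k\int_{y_k+B_{R_0}}\rho_{n_k}\ge\beta/2=:\alpha>0$. Taking $R=R_0$ and this $\alpha$ yields precisely the non-vanishing statement. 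I do not expect a serious obstacle here: this is the soft dichotomy rather than the full Lions trichotomy, so no splitting sublevel has to be analysed, and Helly's selection theorem can be avoided entirely. The only points requiring genuine care are (i) phrasing vanishing through the single quantity $\ell(R)\equiv0$, so the ``for all $R$'' clause is handled simultaneously, and (ii) the two successive subsequence extractions in the non-vanishing case --- first to realise $\beta$ as a limit, then to make the mass in $y_k+B_{R_0}$ converge --- which must be ordered correctly so as to retain a single common subsequence.
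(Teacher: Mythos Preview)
Your argument is correct and is the standard derivation of the vanishing/non-vanishing dichotomy via Lions' concentration function. Note, however, that the paper does not supply its own proof of this lemma: it is simply quoted as Lemma~2.1 of \cite{ZC}, so there is no in-paper argument to compare against. One cosmetic point: once you have $\liminf_{k}\int_{y_k+B_{R_0}}\rho_{n_k}\ge\beta$, any convergent subsequence (which exists by Bolzano--Weierstrass, since the integrals are bounded by $\int_{\R^N}\rho_{n_k}\to\lambda$) has limit at least $\beta$, so you may take $\alpha=\beta$ rather than the unnecessarily weak $\alpha=\beta/2$.
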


\begin{lemma}\label{3.2}(Lemma 2.3, \cite{DS})
  Let $1\leq p<\infty$, with $p\neq \frac{2N}{N-2\alpha}$. Assume that $u_n$ is bounded in $L^p(\R^N)$, $(-\Delta)^{\alpha/2}u$
is bounded in $L^2(\R^N)$ and
$$\sup_{y\in\R^N}\int_{y+B_R}|u_n|^p dx\rightarrow 0
\,\,\text{for some}\,\,R>0\,\,\text{as}\,\,n\rightarrow \infty.$$
Then $u_n\rightarrow 0$ in $L^q (\R^N)$, for $q$ between
$p$ and $\frac{2N}{N-2\alpha}$.
\end{lemma}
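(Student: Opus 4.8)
The plan is to localize the problem on a covering of $\R^N$ by balls of a fixed radius $R$ with bounded overlap, apply a local fractional Sobolev/interpolation inequality on each ball, and then extract a positive power of the vanishing quantity $\va_n:=\sup_{y\in\R^N}\int_{B(y,R)}|u_n|^p\,dx$ while summing the residual local energies against the global bounds. I assume throughout that $p<2^*:=2N/(N-2\alpha)$; the case $p>2^*$ is entirely analogous, interpolating instead between $L^{2^*}$ and $L^p$. Fix $q$ strictly between $p$ and $2^*$ and choose a countable covering $\{B(y_i,R)\}_i$ of $\R^N$ in which every point lies in at most $\kappa=\kappa(N)$ balls. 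This bounded overlap will be used twice: first, $\sum_i\int_{B(y_i,R)}|u_n|^p\,dx\le\kappa\,\|u_n\|_{L^p}^p$; second, writing $[u_n]_{\alpha,B}^2=\int_B\int_B|u_n(x)-u_n(y)|^2/|x-y|^{N+2\alpha}\,dx\,dy$ and noting that the double integral over $B(y_i,R)\times B(y_i,R)$ discards pairs with $|x-y|\ge 2R$, one also gets $\sum_i[u_n]_{\alpha,B(y_i,R)}^2\le\kappa\,\|(-\Delta)^{\alpha/2}u_n\|_{L^2}^2$. Thus both families of local energies sum to the uniformly bounded global quantities.

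On a single ball $B=B(y_i,R)$ I combine the local fractional Sobolev embedding $H^\alpha(B)\hookrightarrow L^{2^*}(B)$ with the interpolation inequality between $L^p(B)$ and $L^{2^*}(B)$. Choosing $\sigma\in(0,1)$ by $\frac1q=\frac{1-\sigma}{p}+\frac{\sigma}{2^*}$ (this is exactly where $q$ strictly between $p$ and $2^*$ enters), I obtain a bound of the form $\int_B|u_n|^q\,dx\le C\,a_i^{\mu}\,b_i^{\nu}$, where $a_i=\int_B|u_n|^p\,dx$, $b_i=[u_n]_{\alpha,B}^2+\|u_n\|_{L^2(B)}^2$ is the local energy, $\mu=(1-\sigma)q/p$, and $\nu=\sigma q/2$. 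Two elementary exponent facts drive everything: since $\sigma\in(0,1)$ we have $\mu,\nu>0$, and since $\frac{\sigma}{2}>\frac{\sigma}{2^*}$ we have $\mu+\nu=q\bigl(\frac{1-\sigma}{p}+\frac{\sigma}{2}\bigr)>q\cdot\frac1q=1$.

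Summing over $i$, I now extract smallness. If $\nu\le1$, I split $a_i^{\mu}=a_i^{\mu+\nu-1}\,a_i^{1-\nu}\le\va_n^{\mu+\nu-1}\,a_i^{1-\nu}$, using $a_i\le\va_n$ and $\mu+\nu-1>0$, and then apply Hölder for sums with the conjugate exponents $\frac{1}{1-\nu},\frac1\nu$, which gives $\sum_i a_i^{\mu}b_i^{\nu}\le\va_n^{\mu+\nu-1}\bigl(\sum_i a_i\bigr)^{1-\nu}\bigl(\sum_i b_i\bigr)^{\nu}$. If instead $\nu>1$, I use $a_i^{\mu}\le\va_n^{\mu}$ together with the superadditivity bound $\sum_i b_i^{\nu}\le(\sum_i b_i)^{\nu}$, valid for $\nu\ge1$. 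In either case the sums $\sum_i a_i$ and $\sum_i b_i$ are controlled by the global bounds of the first step, so $\|u_n\|_{L^q}^q\le\sum_i\int_{B(y_i,R)}|u_n|^q\,dx\le C\,\va_n^{\gamma}$ with $\gamma=\min\{\mu+\nu-1,\mu\}>0$, and the hypothesis $\va_n\to0$ yields $u_n\to0$ in $L^q(\R^N)$. The strictness of the interval is essential: at $q=p$ one has $\sigma=0$ and $\mu+\nu=1$, so no power of $\va_n$ survives.

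\textbf{Main obstacle.} The delicate point is the nonlocality of $(-\Delta)^{\alpha/2}$. Unlike the classical case, the local energy $b_i$ is not a genuine restriction of a norm, so I must justify both the local embedding $H^\alpha(B)\hookrightarrow L^{2^*}(B)$ and, more importantly, that the truncated Gagliardo energies $[u_n]_{\alpha,B(y_i,R)}^2$ still sum, with a finite-overlap constant, to the global seminorm; this succeeds precisely because two points at distance $\ge 2R$ can never lie in a common ball of radius $R$, so long-range interactions are never double-counted. A secondary, routine difficulty is the lower-order term $\|u_n\|_{L^2(B)}^2$ produced by the local embedding when $p\ne2$: since $2$ lies between $p$ and $2^*$, it is absorbed by interpolating $L^2(B)$ between $L^p(B)$ and $L^{2^*}(B)$ and re-running the same bookkeeping, which I would carry out but not detail here.
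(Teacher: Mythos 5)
You should first note that the paper itself contains no proof of this lemma: it is quoted verbatim (as Lemma 2.3 of the cited reference [DS]), so there is no in-paper argument to compare against, and your write-up is supplying a proof in the standard Lions style, which is the same family of argument as the cited source. The core mechanics of your proof are correct: the bounded-overlap covering, the summability $\sum_i[u_n]^2_{\alpha,B(y_i,R)}\le\kappa\,C\|(-\Delta)^{\alpha/2}u_n\|^2_{L^2}$ (your observation that a pair of points can share a ball only if it is at distance $<2R$, and is then counted at most $\kappa$ times, is exactly right), the local embedding $H^\alpha(B)\hookrightarrow L^{2^*}(B)$ with a constant uniform over balls of fixed radius, the exponent arithmetic $\mu,\nu>0$, $\mu+\nu>1$, and the two-case summation (H\"older for sums when $\nu\le1$, superadditivity when $\nu>1$). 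In particular, in the only case the paper ever uses the lemma (Step 2(i) of the proof of Theorem 1.1, where $p=2$), your local energy satisfies $\sum_i b_i\le\kappa\bigl(C\|(-\Delta)^{\alpha/2}u_n\|^2_{L^2}+\|u_n\|^2_{L^2}\bigr)\le C$ with no further work, and your proof is complete as written.

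The genuine gap sits exactly at the point you defer, the lower-order term $\|u_n\|^2_{L^2(B)}$ when $p\neq2$. Your justification --- ``since $2$ lies between $p$ and $2^*$'' --- is false whenever $2<p<2^*$. In that regime the correct fix is H\"older on the finite-measure ball, $\|u_n\|^2_{L^2(B_i)}\le C_R\,a_i^{2/p}$, after which the extra contribution to $\sum_i\int_{B_i}|u_n|^q\,dx$ is $C\sum_i a_i^{\mu+2\nu/p}=C\sum_i a_i^{q/p}$, which converges precisely because $q>p$; note also that with this replacement your case $\nu>1$ must be rerun by expanding $b_i^\nu\le C\bigl([u_n]^{2\nu}_{\alpha,B_i}+a_i^{2\nu/p}\bigr)$ before summing, since $\sum_i b_i$ itself is no longer known to be finite ($\sum_i a_i^{2/p}$ with $2/p<1$ can diverge). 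More seriously, your claim that the supercritical case $p>2^*$ is ``entirely analogous'' is not true: there $q<p$, so the same lower-order term produces $\sum_i a_i^{q/p}$ with exponent $q/p<1$, and such a sum can blow up even though $\sum_i a_i\le C$ and $\sup_i a_i\to0$ (take $a_i=\va_n$ for $i\le C/\va_n$ and $a_i=0$ otherwise: the sum is $C\va_n^{q/p-1}\to\infty$). In that case the two interpolation exponents exactly saturate the H\"older-for-sums condition relative to the only available global bounds ($L^p$ by hypothesis, $L^{2^*}$ by the Sobolev inequality), so no positive power of $\va_n$ can be extracted by your scheme, and the case needs a genuinely different treatment. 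This does not affect the paper, which only invokes the lemma with $p=2$, but as a proof of the lemma as stated your write-up leaves $p>2^*$ open and mis-states the absorption argument for $2<p<2^*$.
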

\begin{lemma}\label{jl33}
Suppose that $0<s<2\alpha$ and $N>2\alpha$. Then there exists $C>0$ such that for any $u\in \dot{H}^\alpha(\R^N)$,
\begin{equation}\label{j2.3}
\bigl(\int_{\R^N}\frac{|u(x)|^{2^*_{s}}}{|x|^s}dx\bigl)^{\frac{2}{2^*_{s}}}\leq C\|u\|^2_{\dot{H}^\alpha(\R^N)}，
\end{equation}
a.e., 
\[\dot{H}^\alpha(\R^N)\hookrightarrow L^{2^*_{s}}_{}(\R^N,|x|^{-s})\]
is continuous. In addition, the inclusion 
\[\dot{H}^\alpha(\R^N)\hookrightarrow L_{loc}^p(\R^N,|x|^{-s}),\] 
is compact if $2\leq p<2^*_{s}$.
\end{lemma}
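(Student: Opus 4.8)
The plan is to treat the continuous inclusion and the compact inclusion separately.

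For the continuous embedding \eqref{j2.3} I would pass to the Riesz--potential representation. Writing $f:=(-\Delta)^{\alpha/2}u\in L^2(\R^N)$, one has $u=c_{N,\alpha}\,I_\alpha f$, where $I_\alpha$ is the Riesz potential of order $\alpha$ and $\|f\|_{L^2}=\|u\|_{\dot H^\alpha(\R^N)}$. Then \eqref{j2.3} is exactly the weighted estimate $\big\||x|^{-\beta}I_\alpha f\big\|_{L^{2^*_s}}\le C\|f\|_{L^2}$ with $\beta=s/2^*_s$, which is an instance of the Stein--Weiss (weighted Hardy--Littlewood--Sobolev) inequality. It then remains only to verify the admissibility conditions: the scaling identity $\tfrac{1}{2^*_s}=\tfrac12+\tfrac{\beta-\alpha}{N}$, the sign condition $\beta\ge0$, and the strict bound $\beta<N/2^*_s$. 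Substituting $2^*_s=2(N-s)/(N-2\alpha)$, these reduce respectively to an algebraic identity and to $0<s<N$, both guaranteed by the hypotheses $N>2\alpha$ and $0<s<2\alpha$. This establishes the continuous inclusion (and, in passing, $S_{\alpha,s}>0$).

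For the compact inclusion into $L^p_{loc}(\R^N,|x|^{-s})$ with $2\le p<2^*_s$, I would take $u_n\rightharpoonup u$ weakly in $\dot H^\alpha(\R^N)$, set $v_n:=u_n-u$ (so $\{v_n\}$ is bounded in $\dot H^\alpha$ and $v_n\rightharpoonup0$), fix a bounded set $K$, and show $\int_K|v_n|^p|x|^{-s}\,dx\to0$, splitting $K$ into a small ball $B(r)$ about the singularity and the remainder $K\setminus B(r)$. On $K\setminus B(r)$ the weight obeys $|x|^{-s}\le r^{-s}$, so the contribution is controlled by $r^{-s}\int_K|v_n|^p\,dx$; since $\{u_n\}$ is bounded in $\dot H^\alpha$ and hence, by the $s=0$ Sobolev inequality, in $L^{2^*}(\R^N)$, its restrictions to a bounded neighbourhood of $K$ are bounded in $H^\alpha$ (the Gagliardo seminorm being dominated by $\|u_n\|_{\dot H^\alpha}$ through the identity recorded in the Introduction), and as $p<2^*_s<2^*$ the compact fractional Rellich--Kondrachov embedding \cite{NPV} gives $v_n\to0$ in $L^p(K)$ for each fixed $r$. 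The genuinely weighted estimate is near the origin: applying H\"older with exponents $2^*_s/p$ and $2^*_s/(2^*_s-p)$ to the factorisation $|v_n|^p|x|^{-s}=\big(|v_n|^{2^*_s}|x|^{-s}\big)^{p/2^*_s}\,|x|^{-s(1-p/2^*_s)}$ yields
$$\int_{B(r)}\frac{|v_n|^p}{|x|^s}\,dx\le\Bigl(\int_{B(r)}\frac{|v_n|^{2^*_s}}{|x|^s}\,dx\Bigr)^{p/2^*_s}\Bigl(\int_{B(r)}|x|^{-s}\,dx\Bigr)^{(2^*_s-p)/2^*_s}.$$
The first factor is bounded uniformly in $n$ by \eqref{j2.3}, while $\int_{B(r)}|x|^{-s}\,dx=\omega_{N-1}(N-s)^{-1}r^{N-s}\to0$ as $r\to0$ because $s<N$; hence the $B(r)$--term is at most $Cr^{(N-s)(2^*_s-p)/2^*_s}$, uniformly in $n$.

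Combining the two estimates, given $\va>0$ I would first choose $r$ so small that the origin term is below $\va$ for every $n$, and then send $n\to\infty$ to kill the annular term, obtaining $\limsup_n\int_K|v_n|^p|x|^{-s}\,dx\le\va$; as $\va$ is arbitrary this gives the asserted strong convergence, i.e.\ the compactness. The main obstacle is precisely the behaviour at the singular point $x=0$, where the weight $|x|^{-s}$ obstructs a direct appeal to Rellich--Kondrachov; the mechanism that saves the argument is that the critical Sobolev--Hardy bound \eqref{j2.3} absorbs the singularity while the vanishing measure of $B(r)$ supplies the smallness. This is also where strict subcriticality $p<2^*_s$ is indispensable: at $p=2^*_s$ the measure factor carries exponent $0$, no smallness is gained, and compactness fails --- in agreement with the concentration phenomena at the critical exponent that the rest of the paper is devoted to analysing.
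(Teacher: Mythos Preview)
Your proof is correct, but the route differs from the paper's in two places.

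For the continuous embedding the paper simply refers the reader to \cite{Y}; your explicit derivation via the Riesz potential and the Stein--Weiss inequality is a clean direct alternative, and the admissibility check you give is accurate.

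For the compactness, both you and the paper split at a ball $B(0,r)$, but the mechanism near the origin is different. The paper applies H\"older with the pair $\bigl(\tfrac{2\alpha}{s},\tfrac{2\alpha}{2\alpha-s}\bigr)$ to write
\[
\int_{B(0,r)}\frac{|v_n|^{p}}{|x|^s}\,dx\le \Bigl(\int_{B(0,r)}\frac{|v_n|^{2}}{|x|^{2\alpha}}\,dx\Bigr)^{s/2\alpha}\Bigl(\int_{B(0,r)}|v_n|^{(p-s/\alpha)\frac{2\alpha}{2\alpha-s}}\,dx\Bigr)^{(2\alpha-s)/2\alpha},
\]
then invokes the fractional \emph{Hardy} inequality (weight $|x|^{-2\alpha}$) on the first factor and Rellich--Kondrachov on the second (whose exponent is $<2^*$ precisely when $p<2^*_s$); thus the whole $B(0,r)$ contribution tends to $0$ with $n$ for \emph{fixed} $r$. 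You instead recycle the Sobolev--Hardy inequality \eqref{j2.3} itself and use the vanishing measure $\int_{B(r)}|x|^{-s}dx\to0$ to make the origin term small \emph{uniformly in $n$}, postponing Rellich to the annulus. Your version is more self-contained (it avoids the separate Hardy inequality) and makes the ``small $r$ first, then $n\to\infty$'' $\varepsilon$-argument transparent; the paper's version buys a slightly sharper statement on $B(0,r)$ (the integral there actually goes to zero, not merely stays small), at the price of an extra tool.
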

\begin{proof}
The proof of (\ref{j2.3}) is similar to that of Lemma 3.1 in \cite{Y}. Now we prove the compact impeding if $2\leq p<2^*_{s}$.
	Let  $\{u_n\}$ be a bounded sequence in $\dot{H}^\alpha(\R^N)$, then up to a subsequence (still denoted by $\{u_n\}$), 
	\[u_n \rightharpoonup u\text{ in } \dot{H}(\R^N).\]	
	Denote $v_n=u_n-u$, then for any $B(0,r)$, 
	\[v_n \rightharpoonup 0\text{ in } \dot{H}(\R^N),\ \ v_n \rightarrow 0 \text{ in } L^q(B(0,r)),\, 2\leq q <  2^*=\frac{2N}{N-2\alpha}.\]
	Fix $r>0$,
	since  $(p-\frac{s}{\alpha})(\frac{2\alpha}{2\alpha-s})<2^*$, it follows
	\begin{equation}\label{l2.41}
	\begin{split}
	\int_{B(0,r)}\frac{|v_n|^{p}}{|x|^s}dx&
=\int_{B(0,r)}\frac{|v_n|^{s/\alpha}}{|x|^s}|v_n|^{p-s/\alpha}dx\\
&\leq \Bigl(\int_{B(0,r)}（\frac{|v_n|^{2}}{|x|^{2\alpha}}dx\Bigl )^{\frac{s}{2\alpha}}
\Bigl (\int_{B(0,r)}|v_n|^{(p-\frac{s}{\alpha})(\frac{2\alpha}{2\alpha-s})}dx\Bigl)^{\frac{2\alpha-s}{2\alpha}}
\\&\leq c\|(-\Delta )^{\alpha/2}v_n\|_{L^2(\R^N)}^{\frac{s}{2\alpha}}\Bigl (\int_{B(0,r)}|v_n|^{(p-\frac{s}{\alpha})(\frac{2\alpha}{2\alpha-s})}dx\Bigl)^{\frac{2\alpha-s}{2\alpha}}\rightarrow 0,
	\end{split}
	\end{equation}
	and 
		\begin{equation}\label{l2.42}
		\begin{split}
		\int_{B(0,r)^C}\frac{|v_n|^{p}}{|x|^s}dx
		\leq\int_{B(0,r)^C}\frac{|v_n|^{p}}{r^s}dx\leq \frac{1}{r^s}\|v_n\|_{L^p(\R^N)}^p.
		\end{split}
		\end{equation}
		Letting $r\rightarrow \infty$, collecting (\ref{l2.41}) and (\ref{l2.42}), it implies
		that	
	\[u_n\rightarrow u \text{ in } L_{loc}^{p}(\R^N,|x|^{-s}).\]
This completes the proof.
\end{proof}

\begin{lemma}\label{3.5}
Let $\{u_n\}$ be a Palais-Smale  sequence of $I$ at level
$d\in\mathbb{R}$. Then $d\geq 0$ and $\{u_n\}$ $\subset$ $H^{\alpha}
(\R^N)$ is bounded. Moreover,
every Palais-Smale sequence for $I$ at a level zero converges
strongly to zero.
\end{lemma}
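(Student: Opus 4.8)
The plan is to prove Lemma \ref{3.5} in two stages: first establishing boundedness of a Palais-Smale sequence together with the nonnegativity $d \geq 0$, and then using the resulting estimates to show strong convergence to zero when $d = 0$. For the boundedness, I would begin from the standard Palais-Smale conditions: $I(u_n) \to d$ and $I'(u_n) \to 0$ in $(H^\alpha(\R^N))^*$, which gives $I(u_n) - \frac{1}{q}\langle I'(u_n), u_n\rangle = d + o(1) + o(1)\|u_n\|_{H^\alpha}$. The key point is that subtracting $\frac{1}{q}$ times the derivative applied to $u_n$ cancels the subcritical term $\int k(x)|u_n|^q$ exactly, while the remaining quadratic part $\left(\frac{1}{2} - \frac{1}{q}\right)\int(|(-\Delta)^{\alpha/2}u_n|^2 + a(x)u_n^2)\,dx$ has a strictly positive coefficient since $q > 2$, and the critical Sobolev-Hardy term $\left(\frac{1}{q} - \frac{1}{2^*_s}\right)\int \frac{|u_n|^{2^*_s}}{|x|^s}$ also carries a favorable sign because $q < 2^* < 2^*_s$ need not hold directly; more carefully, since $q < 2^*_s$ one has $\frac{1}{q} > \frac{1}{2^*_s}$, so that term is nonnegative as well. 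Using assumption (b), namely $\hat a = \inf a(x) > 0$, the quadratic form controls $\|u_n\|_{H^\alpha}^2$ from below, so I conclude $c\|u_n\|_{H^\alpha}^2 \leq d + o(1) + o(1)\|u_n\|_{H^\alpha}$, which forces $\{u_n\}$ to be bounded.

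Once boundedness is in hand, the nonnegativity $d \geq 0$ follows by a complementary combination: from $\langle I'(u_n), u_n\rangle = o(1)\|u_n\|_{H^\alpha} = o(1)$ I can solve for one of the nonlinear integrals in terms of the others, substitute back into $I(u_n)$, and observe that the resulting expression is a sum of nonnegative quantities up to $o(1)$. Concretely, writing $I(u_n) = I(u_n) - \frac{1}{2}\langle I'(u_n), u_n\rangle$ isolates the two nonlinear terms with positive coefficients $\left(\frac{1}{2} - \frac{1}{2^*_s}\right)\int\frac{|u_n|^{2^*_s}}{|x|^s}$ and $\left(\frac{1}{2} - \frac{1}{q}\right)\int k(x)|u_n|^q$, both nonnegative since $2 < q < 2^*_s$, giving $d = \lim I(u_n) \geq 0$.

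For the final claim, suppose $d = 0$. The two displayed nonnegative combinations both converge to $d = 0$, so I obtain $\int\frac{|u_n|^{2^*_s}}{|x|^s}\,dx \to 0$ and $\int k(x)|u_n|^q\,dx \to 0$; since $k \geq \hat k > 0$ by (b), the latter gives $\int|u_n|^q \to 0$ as well. Returning to $\langle I'(u_n), u_n\rangle = o(1)$, the quadratic part $\int(|(-\Delta)^{\alpha/2}u_n|^2 + a(x)u_n^2)\,dx$ equals the sum of the two nonlinear integrals plus $o(1)$, hence tends to zero. Because this quadratic form dominates $\hat a\|u_n\|_{L^2}^2 + \|(-\Delta)^{\alpha/2}u_n\|_{L^2}^2$ and is therefore equivalent to $\|u_n\|_{H^\alpha}^2$, I conclude $\|u_n\|_{H^\alpha} \to 0$, i.e. strong convergence to zero.

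The main obstacle I anticipate is not the algebra of the exponent inequalities, which is routine, but ensuring that the Sobolev-Hardy term is genuinely controlled so that the sign arguments are legitimate — specifically that the integral $\int\frac{|u_n|^{2^*_s}}{|x|^s}\,dx$ is finite and well-behaved along the sequence. This is exactly where Lemma \ref{jl33} enters: the continuous embedding $\dot H^\alpha(\R^N) \hookrightarrow L^{2^*_s}(\R^N, |x|^{-s})$ guarantees that the critical integral is bounded by $C\|u_n\|_{H^\alpha}^{2^*_s}$, so all manipulations with that term are justified for bounded sequences. The only subtlety worth checking is that the $o(1)\|u_n\|_{H^\alpha}$ error from $I'(u_n) \to 0$ is absorbed correctly in the boundedness step before boundedness is known; this is handled by the elementary inequality that $c X^2 \leq A + BX$ with $c, B > 0$ forces $X$ bounded.
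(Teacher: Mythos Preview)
Your argument has a genuine gap in the boundedness step. You assert that $q<2^*_s$, so that the coefficient $\frac{1}{q}-\frac{1}{2^*_s}$ in front of the Sobolev--Hardy integral is nonnegative. But this inequality is \emph{not} guaranteed: since $s>0$ one has $2^*_s=\frac{2(N-s)}{N-2\alpha}<\frac{2N}{N-2\alpha}=2^*$, while the hypothesis only says $q\in(2,2^*)$. Hence $q$ may well lie in $[2^*_s,2^*)$, in which case $\frac{1}{q}-\frac{1}{2^*_s}\le 0$ and the displayed combination $I(u_n)-\frac{1}{q}\langle I'(u_n),u_n\rangle$ no longer dominates $c\|u_n\|_{H^\alpha}^2$; the negative Hardy term scales like $\|u_n\|_{H^\alpha}^{2^*_s}$ and cannot be absorbed.

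The paper closes this gap by splitting into two cases. When $q\le 2^*_s$ it uses exactly your combination $I(u_n)-\frac{1}{q}\langle I'(u_n),u_n\rangle$. When $2^*_s<q<2^*$ it instead subtracts $\frac{1}{2^*_s}\langle I'(u_n),u_n\rangle$, which cancels the Hardy term and leaves
\[
\Bigl(\tfrac{1}{2}-\tfrac{1}{2^*_s}\Bigr)\int_{\R^N}\bigl(|(-\Delta)^{\alpha/2}u_n|^2+a(x)u_n^2\bigr)dx
+\Bigl(\tfrac{1}{2^*_s}-\tfrac{1}{q}\Bigr)\int_{\R^N}k(x)|u_n|^q\,dx,
\]
both terms now nonnegative. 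In other words, one subtracts $\max\{\frac{1}{q},\frac{1}{2^*_s}\}\langle I'(u_n),u_n\rangle$. Once this is fixed, your arguments for $d\ge 0$ and for the $d=0$ case are correct; they are slightly more elaborate than the paper's (which reads both conclusions off directly from $d\ge C\limsup\|u_n\|_{H^\alpha}^2$), but they work.
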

\begin{proof}
Since $a(x)\geq 0$, $\bar a>0$,
$\inf_{\R^N} a(x)=\hat{a}>0$, we have
$$\|u_n\|_{\dot{H}^\alpha (\R^N)}^2+\int_{\R^N}
a(x)|u_n|^2dx\geq
c\|u_n\|_{H^{\alpha}(\R^N)}^2,$$ and hence
for $q\leq 2^*_{s}$
\begin{eqnarray}\label{l3.1}
\begin{split}
 d+1+o(\|u_n\|) \geq &
I(u_n)-\frac{1}{q}\langle I'(u_n), u_n\rangle\\
=&(\frac{1}{2}-\frac{1}{q})\int_{\R^N}\bigl (|(-\Delta)^{\alpha/2}u_n|^2+a(x)|u_n|^2\bigl )dx \\&+(\frac{1}{q}-\frac{1}{{2^*_{s}}})\int_{\R^N}\frac{|u_n|^{{2^*_{s}}}}{|x|^s} dx\\
\geq & C\|u_n\|_{H^{\alpha}(\R^N)}^2 ,
\end{split}
\end{eqnarray}
for $2^*_{s}<q<2^*$,
 \begin{equation}\label{l3.2}
 \begin{split}
 d+1+o(\|u_n\|)&\geq
 I(u_n)-\frac{1}{2^*_{s}}\langle I'(u_n), u_n\rangle\\
 &=(\frac{1}{2}-\frac{1}{2^*_{s}})\int_{\R^N}\Bigl(|(-\Delta)^{\alpha/2}
 u_n|^2+a(x)|u_n|^2\Bigl)dx \\ &\ \ \ +(\frac{1}{{2^*_{s}}}-\frac{1}{q})\int_{\R^N}|u_n|^{q} dx\\
 &\geq C\|u_n\|_{H^{\alpha}(\R^N)}^2.
 \end{split}
 \end{equation}
 It follows from (\ref{l3.1}) and (\ref{l3.2})  that $\{u_n\}$ is bounded in $H^{\alpha}(\R^N)$ for $2<q<2^*$. Since
$$d=\lim_{n\rightarrow
\infty}I(u_n)-\max\{\frac{1}{q},\frac{1}{2^*_{s}}\}\langle I'(u_n),u_n\rangle\geq
C\limsup_{n\rightarrow \infty}\|u_n\|_{H^{\alpha}(\R^N)}^2,$$ we have
$d\geq 0$. Suppose now that $d=0$, we obtain from the above inequality
that
$$\lim_{n\rightarrow \infty}\|u_n\|_{H^{\alpha}(\R^N)} =0.$$
\end{proof}
Let $\{u_n\}$ be a Palais-Smale sequence. Up to a subsequence, we
assume that
$$u_n\rightharpoonup u_0 \,\,\,\text{ in } H^{\alpha}(\R^N)\text{ as
}n\rightarrow \infty.$$ Obviously, we have $I'(u_0)=0$.
 Let $v_n=u_n-u_0$, from Lemma \ref{jl33} as $n\rightarrow \infty$, 
 \begin{eqnarray}
v_n\rightharpoonup 0\text{ in
}H^{\alpha}(\R^N),\ \ \ \ \ \ \ \  \label{jl2.7}\\
v_n\rightarrow
0\text{ in
}L_{loc}^{p}(\R^N, |x|^{-s})\text{ for all } 2\leq p<2^*_{s}, \\
v_n\rightarrow
0\text{ in
}L^{q}_{loc}(\R^N)\text{ for all } 2< q<2^*.
 \end{eqnarray}
As a consequence, we have the following Lemma.
\begin{lemma}\label{q}
 $\{v_n\}$ is a Palais-Smale  sequence for $I$ at level
$d_0=d- I(u_0)$. \end{lemma}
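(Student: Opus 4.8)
The plan is to verify the two defining properties of a Palais--Smale sequence at level $d_0$: that $I(v_n)\to d_0=d-I(u_0)$ and that $I'(v_n)\to 0$ in the dual space $(H^{\alpha}(\R^N))^*$. Since $\{u_n\}$ is bounded in $H^{\alpha}(\R^N)$ by Lemma \ref{3.5}, so is $\{v_n\}=\{u_n-u_0\}$, and I already have $v_n\rightharpoonup 0$ weakly in $H^{\alpha}(\R^N)$ together with the local strong convergences (\ref{jl2.7}). The whole argument is a Brezis--Lieb type decomposition: the quadratic part of $I$ splits by Hilbert-space orthogonality of the weak limit, while the two nonlinear terms split by the Brezis--Lieb lemma, suitably weighted.

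For the energy, I would first expand the quadratic part. Writing $u_n=u_0+v_n$ and using $v_n\rightharpoonup 0$, the cross term $\int_{\R^N}(-\Delta)^{\alpha/2}u_0\,(-\Delta)^{\alpha/2}v_n\,dx$ vanishes as $n\to\infty$ (weak convergence in the Hilbert space $\dot{H}^\alpha$), and since $a(x)$ is bounded (being continuous with a finite limit at infinity) we have $a(x)u_0\in L^2$, so $\int_{\R^N}a(x)u_0 v_n\,dx\to 0$; hence the quadratic part of $I(u_n)$ equals the sum of the quadratic parts of $I(u_0)$ and $I(v_n)$, up to $o(1)$. For the nonlinear terms I would invoke the Brezis--Lieb lemma: because $u_n$ is bounded in $L^{2^*_{s}}(\R^N,|x|^{-s})$ (Lemma \ref{jl33}) and $u_n\to u_0$ a.e., one gets $\int_{\R^N}|x|^{-s}(|u_n|^{2^*_{s}}-|v_n|^{2^*_{s}}-|u_0|^{2^*_{s}})\,dx\to 0$, and similarly $\int_{\R^N}k(x)(|u_n|^{q}-|v_n|^{q}-|u_0|^{q})\,dx\to0$ (the bounded weight $k$ causes no trouble). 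Collecting these gives $I(u_n)=I(u_0)+I(v_n)+o(1)$, so $I(v_n)\to d-I(u_0)=d_0$.

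For the derivative, fix $\varphi\in H^{\alpha}(\R^N)$ with $\|\varphi\|\le 1$ and expand $\langle I'(v_n),\varphi\rangle$. The linear-in-$u$ pieces (the fractional bilinear form and $\int a(x)u\varphi$) split exactly because they are linear in $u_n=u_0+v_n$. The task therefore reduces to the uniform-in-$\varphi$ estimate
$$\sup_{\|\varphi\|\le1}\Bigl|\int_{\R^N}\frac{(|u_n|^{2^*_{s}-2}u_n-|v_n|^{2^*_{s}-2}v_n-|u_0|^{2^*_{s}-2}u_0)\varphi}{|x|^s}\,dx\Bigr|\to0$$
and the analogous statement for $k(x)|u|^{q-2}u$; equivalently, $|u_n|^{2^*_{s}-2}u_n-|v_n|^{2^*_{s}-2}v_n\to|u_0|^{2^*_{s}-2}u_0$ strongly in the dual space $L^{(2^*_{s})'}(\R^N,|x|^{-s})$. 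I would establish this through the pointwise inequality $\bigl||a+b|^{p-2}(a+b)-|b|^{p-2}b\bigr|\le \va|b|^{p-1}+C_\va|a|^{p-1}$, which after integration and use of the a.e. convergence $v_n\to0$ (via a Vitali/generalized dominated convergence argument on balls combined with the tightness provided by the boundedness in the weighted $L^{2^*_{s}}$ norm) yields the claimed dual convergence. Using $I'(u_0)=0$ and $\langle I'(u_n),\varphi\rangle\to0$ uniformly, I then conclude $\langle I'(v_n),\varphi\rangle\to0$ uniformly in $\|\varphi\|\le1$, that is $I'(v_n)\to0$.

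The main obstacle is the uniform-in-$\varphi$ Brezis--Lieb splitting of the critical Sobolev--Hardy nonlinearity in the weighted space: one must control the nonlinear remainder simultaneously near the singularity $x=0$ and near infinity, where only weak, not strong, convergence of $v_n$ is available. The fractional character of the operator adds nothing essentially new here, since the kinetic term is quadratic and its splitting is governed purely by the Hilbert-space structure of $\dot{H}^\alpha(\R^N)$; all the analytic difficulty is concentrated in handling the weight $|x|^{-s}$ in the critical term.
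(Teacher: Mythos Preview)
Your argument for the energy level is the same as the paper's: Brezis--Lieb on the two nonlinear integrals plus Hilbert-space orthogonality on the quadratic part. For the derivative, however, you take a different and in fact stronger route. The paper tests $I'(v_n)$ only against a fixed $\phi\in C^\infty_0(\R^N)$ with $\mathrm{supp}\,\phi\subset B(0,r)$, and then uses the \emph{local} strong convergence $v_n\to 0$ in $L^q_{loc}$ and in $L^p_{loc}(\R^N,|x|^{-s})$ (Lemma \ref{jl33}) to kill the nonlinear terms directly: $\int k(x)|v_n|^{q-2}v_n\phi\,dx=o(1)$ and $\int |x|^{-s}|v_n|^{2^*_s-2}v_n\phi\,dx=o(1)$, concluding $\langle I'(v_n),\phi\rangle=o(1)$ for each such $\phi$. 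Your approach instead proves the uniform Brezis--Lieb splitting $|u_n|^{p-2}u_n-|v_n|^{p-2}v_n\to|u_0|^{p-2}u_0$ in the relevant weighted dual space, which yields $\|I'(v_n)\|_{(H^\alpha)^*}\to 0$. This is the sharper statement and is actually what is needed later in the proof of Theorem \ref{a} and Lemma \ref{3.7}, where $I'(v_n)$ is paired with $n$-dependent test functions $\phi_n$; the paper's argument, as written, only delivers pointwise convergence $\langle I'(v_n),\phi\rangle\to 0$ for fixed $\phi$. So your proof is correct and fills a gap that the paper glosses over, at the price of the more technical Vitali/tightness step you outline.
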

\begin{proof}
 By the Br$\acute{e}$zis-Lieb Lemma in \cite{BN} and $v_n\rightharpoonup 0$ in $H^{\alpha}(\R^N)$ , as
 $n\rightarrow\infty$, we have
\begin{equation}\int_{\R^N} |v_n|^{q}dx=\int_{\R^N}
|u_n|^{q}dx-\int_{\R^N}|u_0|^{q}dx+o(1)\,\, \text{for all }\  2\leq
q\leq {2^*_{s}} ; \end{equation} 
\begin{equation}\int_{\R^N }\frac{|v_n|^{2^*_{s}}}{|x|^s}dx=\int_{\R^N}
\frac{|u_n|^{2^*_{s}}}{|x|^s}dx-\int_{\R^N} \frac{| u_0|^{2^*_{s}}}{|x|^s}dx+o(1); \,
\end{equation}
\begin{eqnarray}
\begin{split}
&\int_{\R^N}\int_{ \R^N}\frac{|u_{n}(x)-u_{n}(y)|^2}{|x-y|^{N+2\alpha}}dxdy\\=&	\int_{\R^N}\int_{ \R^N}\frac{|(v_{n}(x)+u(x))-(v_{n}(y)+u(y))|^2}{|x-y|^{N+2\alpha}}dxdy\\
=&\int_{\R^N}\int_{ \R^N}\frac{|v_{n}(x)-v_{n}(y)|^2+|u(x)-u(y)|^2+2(v_n(x)-v_n(y))(u(x)-u(y))}{|x-y|^{N+2\alpha}}dxdy\\
=&\int_{\R^N}\int_{ \R^N}\frac{|v_{n}(x)-v_{n}(y)|^2}{|x-y|^{N+2\alpha}}dxdy+\int_{\R^N}\int_{ \R^N}\frac{|u(x)-u(y)|^2}{|x-y|^{N+2\alpha}}dxdy+o(1).
\end{split}
\end{eqnarray} 
 Hence
$I(v_n)=I(u_n)-I(u_0)+o(1)=d-I(u_0)+o(1)$.

For $v\in C^\infty_0(\R^N)$, there exists a $B(0,r)$ such that $\dis \mathrm{supp}v \subset B(0,r)$. Then$\text{ as } n\rightarrow \infty$,
\begin{equation}
\begin{aligned}\label{l5.16}
|\int_{\R^N}k(x)|v_n|^{q-2}v_n vdx|\leq c
|\int_{B(0,r)}|v_n|^{q-2}v_n vdx|=o(1),
\end{aligned}
\end{equation}
and from Lemma \ref{jl33}, 
\begin{equation}\label{l5.17}
|\int_{\R^N }\frac{|v_n|^{2^*_{s}-2} v_n v}{|x|^s}dx|
\leq |\int_{|x|\leq r }\frac{|v_n|^{2^*_{s}-2} v_n v}{|x|^s}dx|
\leq c \int_{|x|\leq r }\frac{|v_n|^{2^*_{s}-1}}{|x|^s}dx=o(1).
\end{equation}

 By (\ref{jl2.7}), (\ref{l5.16}) and (\ref{l5.17}), we have $\langle v,
I'(v_n)\rangle=o(1)\text{ as }n\rightarrow
 \infty$.
\end{proof}
\begin{lemma}\label{3.7}
Let $\{v_n\}\subset H^{\alpha}(\R^N)$ be a Palais-Smale  sequence of $I$ at
level d and $v_n\rightharpoonup 0\text{ in } H^{\alpha}(\R^N)\text{
as }n\rightarrow \infty$. If there exists a sequence $\{r_n\}\subset
\R^+, \text{ with } r_n\rightarrow 0$ as $n\rightarrow \infty$ such
that $\bar v_n(x):=r_n^{\frac{N-2\alpha}{2}}v_n (r_n x)$ converges weakly
in $\dot{H}^\alpha(\R^N)$ and almost everywhere to some $0\neq v_0\in
\dot{H}^\alpha(\R^N)\text{ as }n\rightarrow \infty$, then $v_0$ solves
(\ref{1.4}) and the sequence
$z_n:=v_n-v_0(\frac{x}{r_n})r_n^{\frac{2\alpha-N}{2}}$ is a
Palais-Smale sequence of $I$ at level $d-I_s(v_0)$. \end{lemma}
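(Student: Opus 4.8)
The statement has three parts: $v_0$ solves (\ref{1.4}); the energy level of $z_n$ is $d-I_s(v_0)$; and $z_n$ is again a Palais--Smale sequence for $I$. The unifying device is the $\dot H^{\alpha}$-isometry $T_\lambda u:=\lambda^{\frac{N-2\alpha}{2}}u(\lambda x)$, under which both $\|\cdot\|_{\dot H^{\alpha}}$ and the Sobolev--Hardy term $\int|u|^{2^*_s}|x|^{-s}dx$ are invariant, while the non-invariant pieces $\int a\,u^2$ and $\int k|u|^q$ pick up strictly positive powers of $\lambda$. Writing $w_n:=r_n^{\frac{2\alpha-N}{2}}v_0(x/r_n)=T_{1/r_n}v_0$ for the concentrating bubble (so $z_n=v_n-w_n$), I note that $\bar v_n=T_{r_n}v_n$, $T_{r_n}w_n=v_0$, and $T_{r_n}z_n=\bar v_n-v_0\rightharpoonup 0$ in $\dot H^{\alpha}$ and $\to 0$ a.e. Throughout I use that $\{v_n\}$ is bounded in $H^{\alpha}(\R^N)$ by Lemma \ref{3.5} and that $\bar v_n$ is bounded in $L^{2^*_s}(\R^N,|x|^{-s})$ by Lemma \ref{jl33}.

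First I would show $v_0$ solves (\ref{1.4}). For $\varphi\in C_0^\infty(\R^N)$ set $\varphi_n:=T_{1/r_n}\varphi=r_n^{\frac{2\alpha-N}{2}}\varphi(x/r_n)$; then $\|\varphi_n\|_{\dot H^{\alpha}}=\|\varphi\|_{\dot H^{\alpha}}$ and $\|\varphi_n\|_{L^2}^2=r_n^{2\alpha}\|\varphi\|_{L^2}^2\to 0$, so $\{\varphi_n\}$ is bounded in $H^{\alpha}$ and $\langle I'(v_n),\varphi_n\rangle\to 0$. Changing variables $x=r_ny$ and using invariance of the two critical quantities, the quadratic and Sobolev--Hardy terms become $\langle\bar v_n,\varphi\rangle_{\dot H^{\alpha}}$ and $\int|\bar v_n|^{2^*_s-2}\bar v_n\varphi\,|y|^{-s}dy$, whereas the $a$- and $k$-terms carry factors $r_n^{2\alpha}$ and $r_n^{\,N-\frac{N-2\alpha}{2}q}$ (both $\to 0$ since $q<2^*$) times bounded integrals, hence vanish. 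Passing to the limit, using $\bar v_n\rightharpoonup v_0$ for the linear term and $\bar v_n\to v_0$ a.e. together with boundedness in $L^{2^*_s}(\R^N,|x|^{-s})$ (so $|\bar v_n|^{2^*_s-2}\bar v_n\rightharpoonup|v_0|^{2^*_s-2}v_0$ in the dual weighted space) for the nonlinear term, gives $\langle v_0,\varphi\rangle_{\dot H^{\alpha}}=\int|v_0|^{2^*_s-2}v_0\varphi\,|x|^{-s}dx$ for all $\varphi$, i.e. $v_0$ weakly solves (\ref{1.4}).

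Next I would compute $I(z_n)$. By the decay estimate (\ref{1.5}) one has $v_0\in L^2(\R^N)\cap L^q(\R^N)$, so $w_n\in H^{\alpha}(\R^N)$ (hence $z_n\in H^{\alpha}$) and $\|w_n\|_{L^2}=r_n^{\alpha}\|v_0\|_{L^2}\to 0$, $\|w_n\|_{L^q}^q=r_n^{\,N-\frac{N-2\alpha}{2}q}\|v_0\|_{L^q}^q\to 0$. For the two invariant terms I use $\|v_n\|_{\dot H^{\alpha}}^2=\|z_n\|_{\dot H^{\alpha}}^2+\|v_0\|_{\dot H^{\alpha}}^2+o(1)$ (expand $z_n=v_n-w_n$ and use $\langle v_n,w_n\rangle_{\dot H^{\alpha}}=\langle\bar v_n,v_0\rangle_{\dot H^{\alpha}}\to\|v_0\|_{\dot H^{\alpha}}^2$), and the weighted Br\'ezis--Lieb lemma \cite{BN} applied to $\bar v_n\to v_0$ a.e., namely $\int|\bar v_n-v_0|^{2^*_s}|x|^{-s}=\int|\bar v_n|^{2^*_s}|x|^{-s}-\int|v_0|^{2^*_s}|x|^{-s}+o(1)$, which after undoing the scaling reads $\int|z_n|^{2^*_s}|x|^{-s}=\int|v_n|^{2^*_s}|x|^{-s}-\int|v_0|^{2^*_s}|x|^{-s}+o(1)$. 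For the non-invariant terms, Cauchy--Schwarz (using $\|v_n\|_{L^2}$ bounded, $\|w_n\|_{L^2}\to 0$) and the elementary inequality $\bigl||a-b|^q-|a|^q\bigr|\le C(|b|^q+|a|^{q-1}|b|)$ with $\|w_n\|_{L^q}\to 0$ give $\int a|z_n|^2=\int a|v_n|^2+o(1)$ and $\int k|z_n|^q=\int k|v_n|^q+o(1)$. Collecting the four terms yields $I(z_n)=I(v_n)-I_s(v_0)+o(1)=d-I_s(v_0)+o(1)$.

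Finally, to show $I'(z_n)\to 0$ in $(H^{\alpha})^*$, I would estimate $\langle I'(z_n)-I'(v_n),\psi\rangle$ uniformly over $\|\psi\|_{H^{\alpha}}\le 1$. Since $v_0$ solves (\ref{1.4}) and that equation is $T_\lambda$-invariant, $w_n$ solves it too, so $\langle I_s'(w_n),\psi\rangle=0$; this cancels $\int(-\Delta)^{\alpha/2}w_n(-\Delta)^{\alpha/2}\psi$ against $\int|w_n|^{2^*_s-2}w_n\psi\,|x|^{-s}$. What remains are: the term $\int a\,w_n\psi$, bounded by $\|a\|_\infty\|w_n\|_{L^2}\|\psi\|_{L^2}\to 0$; the critical difference $\int[g(v_n)-g(z_n)-g(w_n)]\psi\,|x|^{-s}$ with $g(t)=|t|^{2^*_s-2}t$, which by H\"older between $L^{(2^*_s)'}(|x|^{-s})$ and $L^{2^*_s}(|x|^{-s})$ (the latter containing $\psi$ by Lemma \ref{jl33}) and the gradient form of the weighted Br\'ezis--Lieb lemma (again reduced via $T_{r_n}$ to $g(\bar v_n)-g(\bar z_n)-g(v_0)\to 0$ in $L^{(2^*_s)'}(|x|^{-s})$) tends to $0$; and the subcritical difference $\int k[|v_n|^{q-2}v_n-|z_n|^{q-2}z_n]\psi$, controlled by $C\bigl(\|w_n\|_{L^q}^{q-1}+\|z_n\|_{L^q}^{q-2}\|w_n\|_{L^q}\bigr)\|\psi\|_{L^q}\to 0$. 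Hence $\langle I'(z_n),\psi\rangle=\langle I'(v_n),\psi\rangle+o(1)$ uniformly, and $\|I'(z_n)\|_{(H^{\alpha})^*}\to 0$. I expect the main obstacle to be the two weighted Br\'ezis--Lieb statements for the critical Sobolev--Hardy nonlinearity, one for the functional and one in dual form for $I'$, together with the bookkeeping that decouples the concentrating bubble $w_n$ from the non-invariant lower order terms. The rescaling $T_{r_n}$, which converts the moving bubble into the fixed profile $v_0$ and renders the critical quantities invariant, is exactly what makes both Br\'ezis--Lieb applications legitimate; the delicate points are verifying the boundedness and a.e. convergence of $\bar v_n$ in the weighted space and the $L^2\cap L^q$ integrability of $v_0$ supplied by the decay estimate (\ref{1.5}).
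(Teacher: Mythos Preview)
Your approach is the paper's: rescale test functions via $\phi_n=T_{1/r_n}\phi$ and use the Palais--Smale property of $v_n$ to obtain $I_s'(v_0)=0$ (this is exactly the computation (\ref{j62.21})), then combine the vanishing of the rescaled $L^p$ norms of the bubble (the paper's display (\ref{x})) with Br\'ezis--Lieb splittings to decouple $I$ and $I'$; the paper is far terser, writing only ``by the Br\'ezis--Lieb Lemma and the weak convergence, similar to Lemma \ref{q}'' where you supply full detail and the stronger dual-norm convergence of $I'(z_n)$. One caveat, shared with the paper: your appeal to (\ref{1.5}) for $v_0\in L^2\cap L^q$ presumes $v_0>0$, which the lemma does not assume, and even then the decay $|x|^{-(N-2\alpha)}$ gives $v_0\in L^2$ only when $N>4\alpha$; the paper's (\ref{x}) silently presupposes $v_0\in L^p$ as well, so this is an issue in the stated generality of the lemma rather than a defect of your argument relative to the paper.
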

\begin{proof}
First, we prove that $v_0$ solves (\ref{1.4}) and 
$I(z_n)=I(v_n)-I_\mu(v_0)$. Fix a ball $B(0,r)$ and a test
function $\phi\in C^\infty_0(B(0,r))$.
 Since $$\bar v_n(x)\rightharpoonup v_0 \text{  in } \dot{H}^\alpha(\R^N),$$ 
applying Lemma \ref{jl33}, it implies 
\begin{equation}\label{j62.21}
\begin{split}
\langle\phi, I'_s(v_0)\rangle&=\int_{\R^N}\int_{\R^N}\frac{(v_0(x)-v_0(y))(\phi(x)-\phi(y))}{|x-y|^{N+2\alpha}}
dxdy-\int_{\R^N}\frac{|v_0|^{{2^*_{s}}-2}v_0\phi}{|x|^s}
dx\\
&=\int_{\R^N}\int_{\R^N}\frac{(\bar v_n(x)-\bar v_n(y))(\phi(x)-\phi(y))}{|x-y|^{N+2\alpha}}
dxdy-\int_{\R^N}\frac{|\bar v_n|^{{2^*_{s}}-2}\bar v_n\phi}{|x|^s}
dx\\
& \ \ \ +{r_n}^{2\alpha}\int_{\R^N} a({r_n x})\phi\bar v_n
dx-r_n^{N-\frac{N-2\alpha}{2}q}\int_{\R^N}k(r_n x)\phi|\bar v_n|^{q-2}\bar v_ndx+o(1)\\
&=\int_{\R^N}\int_{\R^N}\frac{(v_n(x)-v_n(y))(\phi_n(x)-\phi_n(y))}{|x-y|^{N+2\alpha}}
dxdy-\int_{\R^N}\frac{|v_n|^{{2^*_{s}}-2}v_n\phi_n}{|x|^s}
dx\\
& \ \ \ +\int_{\R^N} a(x)\phi_n v_ndx -\int_{\R^N}
k(x)\phi_n|v_n|^{q-2}v_n dx+o(1) =o(1)\text{ as }n\rightarrow
\infty,
\end{split}
\end{equation}
where $\phi_n(x)=r_n^{\frac{2\alpha-N}{2}}\phi(x /r_n)$. The last equality in  (\ref{j62.21})  holds since
$\|\phi\|_{\dot{H}^\alpha(\R^N)}=\|\phi_n\|_{H^{\alpha}(\R^N)}+o(1)\text{ as
}n\rightarrow \infty$. Thus $v_0$ solves (\ref{1.4}).
 Let
$$ z_n(x)=v_n(x)-r_n^{\frac{2\alpha-N}{2}}v_0(\frac{x}{r_n})\in
H^{\alpha}(\R^N),$$ then $$\bar z_n=r_n^{\frac{N-2\alpha}{2}}z_n(r_n x)=\bar
v_n-v_0(x).$$ Obviously $z_n\rightharpoonup 0 \text{
in }H^{\alpha}(\R^N)\text{ as }n\rightarrow \infty$. Now we prove
that $\{z_n\}$ is a Palais-Smale  sequence of $I$ at level $d-I_s
(v_0)$.

Since
\begin{equation}\label{x}
\begin{split}
\int_{\R^N}|v_0(\frac{x}{r_n})r_n^{\frac{2\alpha-N}{2}}|^pdx=r_n^{N-p\frac{(N-2\alpha)}{2}}\|v_0\|^p_{L^p(\R^N)}\rightarrow 0 ,\,\,\text{ as }n\rightarrow\infty,\text{ for all }{1\leq p< {2^*_{\alpha}}},
 \end{split}
 \end{equation}
 by the Br$\acute{e}$zis-Lieb Lemma and the weak convergence, similar to Lemma \ref{q}, we can prove
 have
$$I(z_n)=I(v_n)-I_s(v_0),$$
$$ \langle I'(z_n),\phi\rangle=o(1)$$ as $n\rightarrow \infty$.
It completes the proof.
\end{proof}
\begin{lemma}\label{l6}
	Let $0<\alpha <N/2,\, 0<s<2\alpha $,  $\{u_n\}\subset {\dot{H}}^\alpha(\R^N)$ be a bounded sequence such that
	\begin{equation}\label{l6.1}
\inf_{n\in N}\int_{\R^N}\frac{|u_n|^{2^*_{s}}}{|x|^s}dx\geq c>0.
	\end{equation}
Then, up to subsequence, there exist a family of positive numbers $\{r_n\}\subset \R^N$ such that 
\begin{equation}\label{l6.2}
\bar u_n \rightharpoonup w\neq 0 \text{ in } \dot{H}^\alpha(\R^N), \end{equation}
where $\bar u_n=r_n^{\frac{N-2\alpha}{2}}u_n(r_n x)$.
\end{lemma}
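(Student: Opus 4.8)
The plan is to capture the concentration of the Hardy--Sobolev mass at the origin by an appropriate dilation, exploiting the scale invariance of both the seminorm $\|\cdot\|_{\dot H^\alpha(\R^N)}$ and the weighted integral $\int_{\R^N}\frac{|\cdot|^{2^*_s}}{|x|^s}\,dx$ under $u\mapsto r^{\frac{N-2\alpha}{2}}u(r\,\cdot)$. First I would record these two invariances; in particular $\|\bar u_n\|_{\dot H^\alpha}=\|u_n\|_{\dot H^\alpha}$ stays bounded, and by the Sobolev--Hardy inequality (\ref{j2.3}) the total masses $\mu_n:=\int_{\R^N}\frac{|u_n|^{2^*_s}}{|x|^s}\,dx$ obey $c\le\mu_n\le C$, so up to a subsequence $\mu_n\to\mu_0\in[c,C]$. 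Then I would use the L\'evy concentration function
\[
Q_n(\lambda):=\int_{B(0,\lambda)}\frac{|u_n(x)|^{2^*_s}}{|x|^s}\,dx,
\]
which is continuous, nondecreasing, with $Q_n(0^+)=0$ and $Q_n(+\infty)=\mu_n\ge c$. Fixing a level $\beta\in(0,c)$, continuity gives $r_n>0$ with $Q_n(r_n)=\beta$, and for $\bar u_n(x)=r_n^{\frac{N-2\alpha}{2}}u_n(r_n x)$ the change of variables $y=r_n x$ yields
\[
\int_{B(0,1)}\frac{|\bar u_n(x)|^{2^*_s}}{|x|^s}\,dx=Q_n(r_n)=\beta,\qquad \int_{\R^N}\frac{|\bar u_n(x)|^{2^*_s}}{|x|^s}\,dx=\mu_n.
\]

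Since $\{\bar u_n\}$ is bounded in $\dot H^\alpha(\R^N)$, up to a subsequence $\bar u_n\rightharpoonup w$ weakly and a.e.; the whole issue is to prove $w\neq 0$. Suppose $w=0$. I would apply Lemma \ref{3.1} to the densities $\rho_n:=\frac{|\bar u_n|^{2^*_s}}{|x|^s}$, whose integrals converge to $\mu_0>0$. The vanishing alternative is excluded, since it would force $\int_{B(0,1)}\rho_n\to 0$, contradicting $\int_{B(0,1)}\rho_n=\beta>0$. Hence there are $\alpha_0>0$, $R_0<\infty$ and $\{y_n\}$ with $\liminf_n\int_{y_n+B_{R_0}}\rho_n\ge\alpha_0$. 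The center cannot run off to infinity: if $|y_n|\to\infty$ then $|x|^{-s}\le(|y_n|-R_0)^{-s}$ on $y_n+B_{R_0}$, so by H\"older and $\dot H^\alpha\hookrightarrow L^{2^*}$ (using $2^*_s<2^*$) one gets $\int_{y_n+B_{R_0}}\rho_n\le(|y_n|-R_0)^{-s}\,\|\bar u_n\|_{L^{2^*}}^{2^*_s}\,|B_{R_0}|^{1-2^*_s/2^*}\to 0$, a contradiction; thus $y_n\to y_\ast$. Moreover, on every compact $K\subset\R^N\setminus\{0\}$ the weight is bounded and, because $2^*_s<2^*$, the embedding $\dot H^\alpha\hookrightarrow L^{2^*_s}(K)$ is compact (the local Rellich property behind Lemma \ref{jl33}); with $w=0$ this forces $\int_K\rho_n\to 0$. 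Therefore the surviving mass can sit neither at infinity nor away from the origin, so it concentrates at $0$: $\int_{B(0,\varrho)}\rho_n\ge\alpha_0/2$ for all small $\varrho$ and large $n$.

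It remains to convert this residual concentration at the origin into a nontrivial profile, and \emph{this is the main obstacle}: compactness fails precisely at the weighted critical exponent $2^*_s$ at the origin, so one cannot read off $w\neq0$ from an embedding, and a level $\beta$ landing ``between bubbles'' can even produce $w=0$. The remedy is to select the dilation to match the concentration scale. Since the unit-ball mass collapses to $0$, there is a finer scale $\sigma_n\to 0$ with $\int_{B(0,\sigma_n)}\rho_n=\alpha_0/4$; rescaling by $\sigma_n$ (i.e.\ replacing $r_n$ by $r_n\sigma_n$) gives a bounded sequence whose unit-ball mass equals $\alpha_0/4$. If its weak limit is nonzero we are done; otherwise we repeat. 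Each refinement captures a fixed amount $\alpha_0/4$ of Hardy--Sobolev mass, which by (\ref{j2.3}) costs a Dirichlet quantum $\ge S_{\alpha,s}(\alpha_0/4)^{2/2^*_s}$ localized in an annulus at a new, separated scale; as these contributions are essentially additive and bounded by $\sup_n\|u_n\|_{\dot H^\alpha}^2$, only finitely many refinements are possible. Thus the procedure terminates at a dilation where $\bar u_n\rightharpoonup w\neq0$, and that corrected family is the $\{r_n\}$ asserted in the statement.
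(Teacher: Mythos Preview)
The paper itself does not prove this lemma: it simply writes ``For the proof of (\ref{l6.2}), refer to the proof of Theorem~1.3 in \cite{Y1}. Here we Omit it.'' So there is no in-paper argument to compare against; your proposal is being measured only against the bare statement and the cited source.

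Your strategy up through the paragraph ending ``\dots it concentrates at $0$'' is sound: the scale invariance, the L\'evy concentration function, ruling out vanishing, the boundedness of $\{y_n\}$ via the decay of the weight, and the use of the compact embedding away from the origin (Lemma~\ref{jl33}) to force all the unit-ball mass to the origin when $w=0$ are all correct and well organized.

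The gap is in the final ``iterate and terminate'' paragraph. Two concrete problems:
\begin{itemize}
\item[(i)] The level $\alpha_0/4$ cannot be repeated. After the first refinement the unit-ball mass equals $\alpha_0/4$; since $Q^{(1)}_n(\sigma)$ is nondecreasing with $Q^{(1)}_n(1)=\alpha_0/4$, there is no $\sigma_n\to 0$ with $\int_{B(0,\sigma_n)}\rho^{(1)}_n=\alpha_0/4$ unless $\sigma_n\ge 1$. To keep zooming in you must strictly decrease the level (e.g.\ halve it), and then the ``fixed quantum'' bookkeeping collapses.
\item[(ii)] More seriously, rescaling by $\sigma_n$ preserves \emph{both} $\|\cdot\|_{\dot H^\alpha}$ and the total weighted mass. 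Nothing is ``used up'' by iterating when every intermediate weak limit is zero: no profile has been extracted, so there is no Brezis--Lieb/orthogonality splitting to invoke. Your sentence ``costs a Dirichlet quantum \dots\ localized in an annulus at a new, separated scale; as these contributions are essentially additive'' is exactly the step that requires proof. In the fractional setting the Gagliardo energy is nonlocal, so ``energy in an annulus'' is not additive without a genuine almost-orthogonality lemma for functions living at asymptotically separated dilation scales. That lemma is essentially the profile decomposition (e.g.\ \cite{PP} in the unweighted case, or the weighted version behind \cite{Y1}); you are assuming its conclusion rather than proving it.
\end{itemize}

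What would close the gap: either (a) import the weighted profile decomposition from \cite{Y1}, which gives the finiteness of concentration scales directly and makes the ``essentially additive'' claim rigorous, or (b) avoid iteration altogether by choosing $\beta$ below the Sobolev--Hardy quantum and using a cutoff plus the inequality (\ref{j2.3}) to derive a direct contradiction from concentration at the origin (this is the standard Lions-type argument). As written, the termination of your iteration is not established.
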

\begin{proof}
For the proof of (\ref{l6.2}), refer to the proof of Theorem 1.3 in \cite{Y1}. Here we Omit it.
\end{proof}
\section{ Non-compactness analysis}
In this section, we prove Theorem 1.1 by Concentration-Compactness
Principle and a delicate analysis of the Palais-Smale sequences of
$I$.

 {\bf Proof of Theorem \ref{1.1}.}
 By Lemma \ref{3.5}, we can assume that $\{u_n\}$ is
bounded. Up to a subsequence, $\text{ let }n\rightarrow \infty$, we
assume that
\begin{gather}
\label{t1.11}u_n\rightharpoonup u \text{ in } H^{\alpha}(\R^N),\\
\label{t1.12}u_n\rightarrow u \text{ in } L^p_{loc}(\R^N)\,\text{ for }1<
p<{2^*_{\alpha}},\\
\label{t1.13}u_n\rightarrow u \text{ a.e. in } \R^N.
\end{gather}
Denote $v_n=u_n-u$, then $\{v_n\}$ is
a Palais-Smale  sequence of $I$ and $v_n\rightharpoonup0$  in
$H^{\alpha}(\R^N)$. Then by Lemma \ref{q} we know that
\begin{gather}
\label{4.1}I(v_n)=I(u_n)-I(u)+o(1),\text{ as }n\rightarrow \infty,\\
\label{4.2}I'(v_n)=o(1),\text{ as }n\rightarrow \infty,\\
\label{4.3}\|v_n\|_{H^{\alpha}(\R^N)}=\|u_n\|_{H^{\alpha}(\R^N)}-\|u\|_{H^{\alpha}(\R^N)}+o(1),\text{
as }n\rightarrow \infty.
\end{gather}

Without loss of generality, we may assume that
$$\|v_n\|^2_{H^{\alpha}(\R^N)}\rightarrow l>0\text{ as
}n\rightarrow\infty.$$ In fact if $l=0$, Theorem 1.1 is proved for $l_1=0,l_2=0$.

{\bf Step 1:}\, Getting rid of the blowing up bubbles caused by the Hardy
term.

Suppose there exists $0<\delta<\infty$ such that
 \begin{equation}\label{4.6}
\int_{|x|<R}\frac{|v_n|^{2^*_{s}}}{|x|^s} dx\geq \delta>0,\,\,\text{ for
some } \,0<R<\infty.
\end{equation}

It follows from Lemma \ref{l6} that there exist a positive sequence $\{r_n\}\subset \R$ such that $$\bar v_n=\dis r_n^{\frac{N-2\alpha}{2}}v_n(r_nx)\rightharpoonup v_0\not\equiv 0\text{ in } \dot{H}^{\alpha}(\R^N)$$
Now we claim that $r_n\rightarrow 0 \text{ as } 	n\rightarrow \infty.$
In fact there exist $R_1>0$ such that 
\begin{equation}\label{jt3.8}
	\int_{B(0,R_1)}|v_0|^pdx=\delta_1>0, \text{ for }1<p<2^*_{\alpha}.
\end{equation}
From the Sobolev compact embedding and (\ref{t1.11})-(\ref{t1.12}), we have that
\[ v_n\rightarrow 0\text{ in } L^p(B(0,r)) \text{ for all }1<p<2^*_{\alpha},\]
\[ \bar v_n\rightarrow v_0
\text{ in } L^p(B(0,r)) \text{ for all }1<p<2^*_{\alpha},\]
\begin{equation}\label{t1.18}
\begin{split}
0\neq\|v_0\|_{L^2(B(0,r))}^2+o(1)=\int_{B(0,r)}|\bar v_n|^2dx=r_n^{-2\alpha}\int_{B(0,r_nR)}|v_n|^2dx.
\end{split}
\end{equation}
If $r_n\rightarrow r_0>0$, then $$r_n^{-2\alpha}\int_{B(0,r_nR_1)}|v_n|^2dx \leq c r_0^{-2\alpha}\|v_n\|^2_{L^2(B(0,cR_1))}\rightarrow 0;$$
if $r_n\rightarrow \infty$, then $$r_n^{-2\alpha}\int_{B(0,r_nR_1)}|v_n|^2dx\leq r_n^{-2\alpha}\|v_n\|^2_{H^\alpha(\R^N)}\rightarrow  0.$$ A contradiction to (\ref{t1.18}). Thus we have $r_n\rightarrow 0$.

 Define $z_n =v_n-v_0(\frac{x}{r_n})r_n^{\frac{2\alpha-N}{2}}\varphi$, then $ z_n \rightharpoonup 0 $ in $H^\alpha (\R^N)$. It follows from Lemma
 \ref{3.7} that $\{z_n\}$ is a Palais-Smale sequence of $I$
 satisfying
 \begin{gather}
 \label{4.12}I(z_n)=I(v_n)-I_s(v_0)+o(1), \text { as } n\rightarrow\infty. 
 \end{gather}
If still there exists a $\delta>0, \text{ such that }
\int_{|x|<R}\frac{|z_n|^{2^*_{s}}}{|x|^s}dx\geq \delta
 >0$, then repeat the previous argument. The iteration must stop after
 finite times. And we will have a new Palais-Smale sequence of
 $I$,
 (without loss of generality) denoted by $\{v_n\}$, such that
 \begin{equation}\label{4.15}\int_{|x|<R}\frac{|v_n|^{2^*_{s}}}{|x|^s} dx=o(1),
 \text{ as } n\rightarrow \infty,\,\,\,\text{ for any } 0<R<\infty,
 \end{equation}
 and $v_n\rightharpoonup 0\text{ in } H^{\alpha}(\R^N) \text{ as } n\rightarrow \infty$.

{\bf Step 2}: Getting rid of the blowing up bubbles caused
by unbounded domains.

Suppose there exists $0<\delta <\infty$ such that
\begin{equation}\label{4.5}
\|v_n\|_{H^\alpha(\R^N)}^2\geq c(\int_{\R^N}|v_n|^qdx)^{\frac{2}{q}}\geq \delta >0.
\end{equation}
 By Lemma \ref{3.1},
 there exists a subsequence still denoted by $\{v_n\}$, such that one
 of the following two cases occurs.
 
 i) Vanish occurs.
 $$\displaystyle\forall \,0< R<\infty, \sup_{y\in\R^N}\int_{y+B_R}(|（(-\Delta ) ^{\alpha/2}v_n|^2+|v_n|^2)dx\rightarrow 0\text{ as }n\rightarrow \infty.$$
 By the Sobolev inequality and
 Lemma \ref{3.2} we have $$\int_{\R^N}|v_n|^pdx\rightarrow 0\text{ as
 }n\rightarrow \infty,\,\,\forall\,1<p<2^*_{s},$$ which contradicts
 (\ref{4.5}).
 
 ii) Nonvanish occurs.
 
 There exist $\beta
 >0,\,0<\bar R<\infty,\,\{y_n\}\subset\R^N \text{ such that }$
 \begin{equation}\label{4.4}
 \liminf_{n\rightarrow \infty}\int_{y_n+B_{\bar R}}(|(-\Delta)^{\alpha/2}
 v_n|^2+|v_n|^2)dx\geq \beta
 >0.
 \end{equation}
 
 We claim $|y_n|\rightarrow\infty$ as
 $n\rightarrow\infty$. Otherwise, $\{v_n\}$ is tight, and thus
 $\|v_n\|_{L^q(\R^N)}\rightarrow 0$ as $n\rightarrow \infty$. This
 contradicts (\ref{4.5}).
 
Fo proceed, we first construct the Palais-Smale sequences of $I^\infty$.

  Denote $\bar
 v_n=v_n(x+y_n)$.
 Since $\|\bar v_n\|_{H^{\alpha}(\R^N)}=\|v_n\|_{H^{\alpha}(\R^N)}\leq c$, without
 loss of generality, we assume that $\text{ as } n\rightarrow
 \infty$,
 $$\bar v_n\rightharpoonup v_0\text{ in }H^{\alpha}(\R^N),$$
 $$\bar v_n\rightarrow v_0 \text{ in } L^p_{loc}(\R^N),\text{ for any }
 1< p < {2^*_{\alpha}}.$$
 By (\ref{4.15}), we have $\forall \phi\in C^\infty_0(\R^N)$,
 \begin{align*}\int_{\R^N}\frac{|
 v_n|^{2^*_{s}-2} v_n\phi}{|x+y_n|^s}dx&=\int_{\R^N}\frac{|
 v_n|^{2^*_{s}-2} v_n\phi_n}{|x|^s}dx\\&=\int_{|x|>r}\frac{|v_n|^{2^*_{s}-2}v_n\phi_n}{|x|^s}dx+o(1)
 \\&\leq\frac{1}{r^s}(\int_{\R^N}|v_n|^{\frac{(2^*_{s}-1)2^*}{2^*-1}}dx)^{\frac{2^*-1}{2}}(\int_{\R^N}|\phi_n|^{2^*}dx)^{1/2^*}+o(1)\text{ as } n\rightarrow
 \infty,
 \end{align*} where $\phi_n=\phi(x-y_n)$.
 Let $r\rightarrow \infty$, since $\frac{(2^*_{s}-1)2^*}{2^*-1}<2^*$, we have
 \begin{equation}\label{4.16}\int_{\R^N}\frac{ |\bar v_n|^{2^*_{s}-2}\bar v_n \phi}{|x+y_n|^s}dx=o(1)\text{ as }
 n\rightarrow \infty.
 \end{equation}
 Similarly we have \begin{equation}\label{4.17}\int_{\R^N}\frac{|\bar v_n|^{2^*_{s}}}{|x+y_n|^s}dx=o(1)\text{ as }
 n\rightarrow \infty.
 \end{equation}
 Since $v_n\rightharpoonup 0 \text{ in
 }H^{\alpha}(\R^N)$ and $\dis\lim_{n\rightarrow\infty}a(x+y_n)=\bar a$,
 we have as $n\rightarrow\infty$,
 $$
o(1)= \int_{\R^N}a(x)v_n\phi_n dx=\int_{\R^N}\bar a \bar v_n\phi
dx+\int_{\R^N}[a(x+y_n)-\bar a] \bar v_n\phi dx $$
 and $$|\int_{\R^N}[a(x+y_n)-\bar a] \bar v_n\phi dx|\leq c(\int_{\R^N}|a(x+y_n)-\bar
 a
 |^2\phi^2dx)^{1/2}=o(1),$$
 that is,
 \begin{equation}\label{s3}
\int_{\R^N}\bar a \bar v_n\phi dx=o(1)=\int_{\R^N}a(x)v_n\phi_n
dx\text{ as } n\rightarrow \infty.
\end{equation}
Similarly we have
 \begin{equation}\label{s4}
\int_{\R^N} k(x) | v_n|^{q-2} v_n\phi_n dx=\int_{\R^N}\bar k |\bar
v_n|^{q-2}\bar v_n\phi dx=o(1)\text{ as } n\rightarrow \infty.
\end{equation}
Recall that $v_n$ is a Palais-Smale
 sequence of $I$, by (\ref{4.16})-(\ref{s4}) we have
 \begin{equation}\langle I'(v_n),\phi_n\rangle+o(1)=\langle {I^\infty}' (\bar
 v_n),\phi\rangle=o(1),\text{ as
}n\rightarrow \infty.\end{equation}
 This shows that $\bar v_n$ is a nonnegative Palais-Smale
 sequence of $I^\infty(u)$, and $v_0$ is a weak solution of
(\ref{(2.1)}).

 We claim that $v_0\not\equiv 0$.
From (\ref{4.5}), we may assume there exists a sequence $\{y_n\}$
satisfying (\ref{4.4}) and
\begin{equation}\label{z3}\int_{B(y_n,R)}|v_n|^qdx=b+o(1)>0,\text{ as
}n\rightarrow \infty,
\end{equation} where $b>0$ is a constant.

If $v_0=0$, we have $\int_{B(R)}|\bar v_n|^qdx=\int_{B(y_n,R)}|
v_n|^qdx=o(1)\text{ as } n\rightarrow \infty$ for $0<R<\infty$ which
contradicts (\ref{z3}).


  Denote $z_n=\bar v_n-v_0$. Since \begin{align*}
 I(v_n)&=\frac{1}{2}\int_{\R^N}\Bigl (|(-\Delta)^{\alpha/2}v_n|^2+a(x)|v_n|^2
\Bigl )dx
-\frac{1}{{2^*_{s}}}\int_{\R^N}\frac{|v_n|^{{2^*_{s}}}}{|x|^s}dx-\frac{1}{q}\int_{\R^N}k(x)|v_n|^q
dx\\
&=\frac{1}{2}\int_{\R^N} \Bigl (|(-\Delta)^{\alpha/2}\bar v_n|^2+a(x+y_n)|\bar
v_n|^2 \Bigl )dx-\frac{1}{{2^*_{s}}}\int_{\R^N}\frac{|\bar
v_n|^{{2^*_{s}}}}{|x+y_n|^s}dx\\
&\ \ \ -\frac{1}{q}\int_{\R^N}k(x+y_n)|\bar v_n|^q dx\\
&=\frac{1}{2}\int_{\R^N}\Bigl (|(-\Delta)^{\alpha/2}\bar v_n|^2+\bar a|\bar
v_n|^2 \Bigl )dx-\frac{1}{q}\int_{\R^N}\bar k|\bar v_n|^q dx+o(1),
\end{align*}
where the last equality is a result of (\ref{4.17}), therefore, as $n\rightarrow \infty$,
\begin{gather}\label{4.20}\|z_n\|_{H^{\alpha}(\R^N)}=\|\bar
v_n\|^2_{H^{\alpha}(\R^N)}-\|v_0\|^2_{H^{\alpha}(\R^N)}+o(1), \\
\label{4.21} I(z_n)=I^\infty(\bar v_n)-I^\infty
(v_0)+o(1)=I(v_n)-I^\infty(v_0)+o(1).
\end{gather}

Hence
$z_n\rightharpoonup0\text{ in }H^{\alpha}(\R^N)\text{ as } n\rightarrow
\infty$,
 and $z_n$ is a Palais-Smale  sequences of
$I$. If $\|z_n\|_{L^q(\R^N)}\rightarrow c>0\text{ as
}n\rightarrow\infty$, then one can repeat Step 2 for finite times, since the amount of
sequences satisfing (\ref{4.4}) is finite.

Thus we  obtain a new Palais-Smale sequence of $I$, without loss of generality still denoted by $v_n$, such that
$$\|v_n\|_{L^q(\R^N)}\rightarrow 0,\ \ \int_{\R^N}\frac{|v_n|^{2^*_{s}}}{|x|^s}dx\rightarrow 0$$
as $n\rightarrow\infty$. Then we have 
$$\|v_n\|^2_{H^\alpha(\R^N)}\leq c\int_{\R^N}(| \bigl (-\Delta )^{\alpha/2}v_n|^2+a(x)|v_n|^2\bigl )dx\leq c\bigl(\|v_n\|^q_{L^q(\R^N)}+ \int_{\R^N}\frac{|v_n|^{2^*_{s}}}{|x|^s}dx\bigl ) \rightarrow 0$$ as $n\rightarrow \infty$.
The proof of Theorem 1.1 is complete.

Now we are ready to prove corollary 1.1 by Mountain Pass Theorem and
Theorem 1.1.

 {\bf Proof of Corollary \ref{b}:}
From
\begin{align*}
I (tu)=&\frac{t^2}{2}\Bigl[\int_{\R^N}|(-\Delta)^{\alpha/2}
u|^2dx+\int_{\R^N}a(x)u^2
dx\Bigl]\\
&-\frac{|t|^{{2^*_{s}}}}{{2^*_{s}}}\int_{\R^N}\frac{|u|^{{2^*_{s}}}}{|x|^s}dx-\frac{
|t|^q}{q}\int_{\R^N}k(x)|u|^q dx,
\end{align*}
we deduce that for a fixed $u\not\equiv 0$ in $H^{\alpha}(\R^N)$, $I
(tu)\rightarrow -\infty$ if $t\rightarrow \infty$. Since
 $$
\int_{\R^N}|u|^q dx\leq
C\|u\|^q_{H^{\alpha}({\R^N})},\,\,\,\int_{\R^N}\frac{|u|^{{2^*_{s}}}}{|x|^s} dx\leq
C\|u\|^{{2^*_{s}}}_{H^{\alpha}({\R^N})},
 $$
 we have
$$
I(u)\geq
c\|u\|_{H^{\alpha}(\R^N)}^2-C(\|u\|_{H^{\alpha}(\R^N)}^q+\|u\|_{H^{\alpha}(\R^N)}^{{2^*_{s}}}).
$$
Hence, there exists $r_0>0$  small such that $I(u)\Bigl|_{\pa
B(0,r_0)}\geq\rho>0$ for $q,\,{2^*_{s}}>2$.

As a consequence,  $I(u)$ satisfies the geometry structure of
Mountain-Pass Theorem. Now define
$$ c^*=:\inf_{\gamma \in \Gamma} \sup_{t
\in [0,1]}I(\gamma (t)),$$ where $ \Gamma=\{\gamma \in
C([0,1],H^{\alpha}(\R^N)):\gamma(0)=0,\gamma(1)=\psi_0\in H^{\alpha}(\R^N)\}$ with $I(t\psi_0)\leq 0$ for all $t\geq 1$.

To complete the proof of Corollary 1.1, we need to verify that
$I(u)$ satisfies the local Palais-Smale conditions. According to
Remarks 2),
 we only need to verify that
\begin{equation}\label{4.28}
c^*<\min\{\frac{2\alpha-s}{2(N-s)}S_{\alpha,s}^{\frac{N-s}{2\alpha-s}},
J^\infty\}.
\end{equation}  

 Set $v_\va(x)=\frac{U_\va}{(\int_{\R^N}\frac{|U_\va|^{{2^*_{s}}}}{|x|^s}dx)^{1/{2^*_{s}}}}$. We
 claim
\begin{equation}\label{4.29}
\max_{t>0}I(tv_\va)<\frac{2\alpha-s}{2(N-s)}S_{\alpha,s}^{\frac{N-s}{2\alpha-s}}.
\end{equation}  In
fact,  from (\ref{1.5}) it is easy to calculate  the following estimates
\begin{eqnarray}
\|v_\va\|_{\dot{H}^\alpha(\R^N)}=S_{\alpha,s}, \label{jt3.19}
\end{eqnarray}
\begin{eqnarray}
\dis
\int_{\R^N}|v_\va|^2dx\leq c\va^{2\alpha-N}\int_{\R^N}\frac{1}{(1+|\frac{x}{\va}|^2)^{N-2\alpha}}dx\leq 
\begin{cases}O(\va^{2\alpha}),\,\,\,\,&N\geq 4\alpha,\\
O(\va^{2\alpha}|\log\va|),&N=4\alpha; \end{cases}\label{jt3.20}
\end{eqnarray}
\begin{eqnarray}
\int_{\R^N}|v_\va|^qdx\geq O(\va^{\frac{(2\alpha-N)q}{2}+N}).\label{jt3.21}
\end{eqnarray}
 Since $2^*>q>2$ we
 have
 \begin{equation}\label{l}
  O(\va^{2\alpha})=o(\va^{\frac{(2\alpha-N)q}{2}+N}),\,\,O(\va^{2\alpha}|\log\va|)=o(\va^{\frac{(2\alpha-N)q}{2}+N}).
 \end{equation}
 Denote
$t_\va$ be the attaining point of $\dis\max_{t>0}I(tv_\va)$, we can
prove that $t_\va$ is uniformly bounded (see \cite{DG}). Hence, for
$\va>0$ sufficient small,
\begin{align*}
 \max_{t>0}I(tv_\va)&=I (t_\va v_\va)\\&\leq
 \max_{t>0}\Bigl\{\frac{t^2}{2}\int_{\R^N}\Bigl(|(-\Delta)^{\alpha/2}
v_\va|^2\Bigl
)dx-\frac{t^{{2^*_{s}}}}{{2^*_{s}}}\int_{\R^N}
\frac{|v_\va|^{{2^*_{s}}}}{|x|^s}dx\Bigl\}\\&\ \ \  -O(\va^{\frac{(2\alpha-N)q}{2}+N}) +
\begin{cases}O(\va^{2\alpha}),\,\,\,\,&N> 4\alpha,\\
O(\va^{2\alpha}|\log\va|),&N=4\alpha; \end{cases} \\
&<\frac{2\alpha-s}{2(N-s)}S_{\alpha,s}^{\frac{N-s}{2\alpha-s}}\text{ \,\,( by (\ref{l}) )}.
\end{align*}
This completes the proof of (\ref{4.29}). By the definition of
$c^*$, we have $c^*<\frac{2\alpha-s}{2(N-s)}S_{\alpha,s}^{\frac{N-s}{2\alpha-s}}$.

Next we verify
\begin{equation}\label{4.30}
c^*<J^\infty.
\end{equation}
Let $\{u_0\}$ be the minimizer of
$J^\infty,\,I^\infty(u_0)=J^\infty$ and
\[\int_{\R^N}\Bigl(|(-\Delta)^{\alpha/2} u_0|^2+\bar a
u_0^2
\Bigl)dx=\int_{\R^N}\bar
k|u_0|^q dx.\]
 Let
$$g(t)=I^\infty(tu_0)=\frac{1}{2}t^2\int_{\R^N}\Bigl(|(-\Delta)^{\alpha/2} u_0|^2+\bar a
u_0^2
\Bigl)dx-\frac{t^q}{q}\int_{\R^N}\bar
k|u_0|^q dx,$$
$$g'(t)=t\int_{\R^N}\Bigl(|(-\Delta)^{\alpha/2} u_0|^2+\bar a
u_0^2
\Bigl)dx-t^{q-1}\int_{\R^N}\bar
k|u_0|^q dx.$$ Thus $g'(t)\geq0$ if $t\in(0,1)$; $g'(t)\leq 0$ if
$t\geq 1$. Then
\begin{equation}\label{4.31}
\begin{aligned}
g(1)&=I^\infty(u_0)=\max_{l}I^\infty (u);\\
\text{ where }&l=\{tu_0,t\geq 0, u_0\text{ fixed }\}.
\end{aligned}
\end{equation}
Since there exists a $t_0>0$ such that $\displaystyle \sup_{t\geq
0}I(tu_0)=I(t_0u_0)$, from (\ref{4.31}) and the assumptions of
$a(x) \text{ and } k(x)$, we have
$$\displaystyle J^\infty=I^\infty (u_0)\geq
I^\infty (t_0u_0)>I(t_0 u_0)=\sup_{t\geq 0}I(t u_0).$$ It
  proves (\ref{4.30}). By (\ref{4.29}) and (\ref{4.30}) we have
(\ref{4.28}). Then the proof is completed.


\end{document}